\documentclass[a4paper,reqno]{amsart}

\usepackage{hyperref}
\usepackage{enumerate}
\usepackage{comment}
\usepackage{amsthm}
\allowdisplaybreaks

\usepackage{amsmath}
\usepackage{comment}
\usepackage{amsfonts}
\usepackage{color}

\usepackage{graphicx}
\usepackage{subfig}
\usepackage{bm}
\usepackage{tikz}
\usepackage{amssymb}

\newtheorem{lemma}{Lemma}[section]
\newtheorem{proposition}[lemma]{Proposition}
\newtheorem{theorem}[lemma]{Theorem}
\newtheorem{corollary}[lemma]{Corollary}

\theoremstyle{definition}

\theoremstyle{remark}
\newtheorem{remark}[lemma]{Remark}
\newtheorem{notation}[lemma]{Notation}

\newcommand{\ffi}{\varphi}

\def\C{{\mathbb{C}}}

\def\N{{\mathbb{N}}}

\def\R{{\mathbb{R}}}
\def\T{{\mathbb{T}}}

\def\Z{{\mathbb{Z}}}

\def\bb{{\mathcal B}}

\def\cG{{\mathcal G}}

\newcommand{\norm}[1]{{\left\|{#1}\right\|}}

\newcommand{\abs}[1]{{\left|{#1}\right|}}
\newcommand{\scal}[1]{{\left\langle{#1}\right\rangle}}

\newcommand{\ud}{\,\mathrm{d}}

\usepackage{bbm}
\usepackage{mathtools}
\numberwithin{equation}{section}
\usepackage{graphicx} 


\usepackage{orcidlink}

\title{Convergence of Hermite Expansions in Modulation Spaces }
\author[P. Jaming]{Philippe Jaming}
\address[P. J.]{Universit\'e de Bordeaux, CNRS, Bordeaux INP, IMB, UMR 5251, F-33400 Talence, France}
\email{philippe.jaming@math.u-bordeaux.fr,}
\author[M. Speckbacher]{Michael Speckbacher}
\address[M. S.]{ 
	Acoustics Research Institute\\ Austrian Academy of Sciences\\ Dominikanerbastei 16, 1010 Vienna,  Austria}
\email{michael.speckbacher@oeaw.ac.at}


\begin{document}
\begin{abstract}
The aim of this paper is to give an elementary proof that Hermite expensions of a function $f$ in the modulation
space $M^p(\R)$ converges to $f$ in $M^p(\R)$ when $1<p<\infty$ and may diverge when $p=1,\infty$.
The result was previously established for $1<p<+\infty$ by Garling and Wojtaszczyk \cite{GaWo}
and for $p=1,\infty$ by Lusky \cite{Lus} in an equivalent setting of Fock spaces by different methods.
Higher dimesional results are also considered.

In an appendix, we also establish upper bounds for the Zak transform of Hermite functions.
\end{abstract}

\maketitle

\section{Introduction}

The aim of this paper is to investigate   the convergence of Hermite series in the modulation spaces $M^p(\R^d)$, $1\le p< \infty$.
Let us  first recall that the $m$-th Hermite function is defined as
$$
h_n(t)=C_ne^{\pi t^2}\frac{\mbox{d}^n}{\mbox{d}t^n}\Big(e^{-2\pi t^2}\Big),\qquad t\in\R,\ n\in\N_0,
$$
where $C_n=(-1)^n \, {2^{1/4}}\big(\sqrt{n!}(2\pi)^{n}\big)^{-1/2}$ is chosen so that $\|h_n\|_{L^2(\R)}=1$. It is then well known that $(h_n)_{n\in\N_0}$ is an orthonormal basis
of eigenfunctions of both the harmonic oscillator and the Fourier transform
and thus play an important role in time-frequency analysis.

The focus of this paper is to investigate to which extent one can use Hermite functions as a generating system in time-frequency analysis.
To be more precise, from the orthonormality of the family $(h_n)_{n\in\N_0}$, it follows that 
any $f\in L^2(\R)$ can be written as a series of Hermite functions
$$ f=\sum_{n=0}^{\infty}\langle f,h_n\rangle h_n$$
where this series is $L^2$-convergent, that is
\begin{equation}
\label{eq:hermiteexpension}
\lim_{N\to\infty}\norm{f-\sum_{n=0}^N\scal{f,h_n}h_n}_{L^2(\R)}=0.
\end{equation}
It is then natural to ask whether other modes of convergence hold. In this direction,
Uspensky \cite{Us} established the pointwise convergence of the series already a century ago. Skipping forward  half a century, 
Askey and 
Wainger \cite{AW} proved  that in \eqref{eq:hermiteexpension}  one may replace the $L^2$-norm  by $L^p$-norms  if and only if $ {4}/{3}<p<4$.
Bochner–Riesz summability of these series outside the range ${4}/{3}<p<4$ was subsequently obtained by 
Thangavelu \cite{Th1,Th2}; we refer to his monograph \cite{Thbook} for a comprehensive account of the subject up to the late 1980s.

Further advances have been made since then. For instance, the first author, together with 
Karoui and 
Spektor \cite{JKS}, showed that Hermite expansions approximate almost time- and band-limited functions nearly as well as the prolate spheroidal wave functions. As a consequence, approximation rates for Sobolev functions by Hermite expansions were also established. Moreover, one may associate Sobolev spaces to the harmonic oscillator (see \cite{Thbook}); in this framework, Bongionnani and Torrea \cite{BT} proved convergence of Hermite series in such spaces.

Here we consider another natural family of function spaces in time-frequency analysis, namely the modulation spaces  which were introduced by Feichtinger in \cite{Fei,Fei3}.
When $z=(x,\omega)\in\R^{2d}$, and $f\in L^2(\R^d)$ we write $\pi(z) f(t)=e^{2i\pi \omega\cdot t}f(t-x)$
for the time-frequency shift of $f$.
Denoting by $\varphi(t)=2^{d/4}e^{-\pi |t|^2},\ t\in\R^d,$ the standard $d$-dimensional Gaussian
and $\mathcal{S}'(\R^d)$ the space of tempered distributions, the modulation spaces are then defined as
$$
M^p(\R^d)=\Big\{f\in \mathcal{S}'(\R^d)\,:\ \|f\|_{M^p(\R^d)}^p:=\int_{\R^{2d}}|\scal{f,\pi(z)\varphi}|^p\,\mbox{d}z<\infty\Big\}.
$$
When $p=1$ this gives the Feichtinger Algebra  \cite{Fei}.
For properties and applications of $M^p(\R^d)$ we refer to the surveys \cite{Fei2,Ber} as well as the book \cite{Gro}
and the references therein. Let us first reduce our analysis to $p=d=1$.  As $M^1(\R)\subset L^2(\R)$, if $f\in M^1(\R)$ then the corresponding Hermite series
converges in $L^2(\R)$, {\it i.e.}, \eqref{eq:hermiteexpension}
holds, but it seems unknown so far whether this convergence also holds
in the $M^1$-norm. To our knowledge, the only  related result is due to Janssen who
considered the spaces 
$$
C(\gamma)=\Big\{f\in L^2(\R)\,: \|f\|_{C(\gamma)}:=\sum_{n=0}^{\infty}|\langle f,h_n\rangle|(1+n)^\gamma<\infty\Big\}
$$
and showed that $C(1/4)\subset M^1(\R)\subset C(-1/4)$ but the space on the right hand side can not be made smaller
and the one on the left larger, at least within the $C(\gamma)$ scale. More precisely, he showed the following.

\begin{theorem}[Janssen \cite{Jan}]
There exists $A_\gamma$ such that, for every $f\in C(\gamma)$, $\|f\|_{M^1(\R)}\leq A_\gamma\|f\|_{C(\gamma)}$ if and only if $\gamma\geq 1/{4}$.

There exists $B_\gamma>0$ such that, for every $f\in M^1(\R)$, $B_\gamma\|f\|_{C(\gamma)}\leq B_\gamma\|f\|_{M^1(\R)}$ if and only if $\gamma\leq- {1}/{4}$.

Moreover,  $M^1(\R)\not\subset C(\gamma)$ when $\gamma>-1/4$, and $ C(\gamma)\not\subset M^1(\R) $ when $\gamma<1/4$.
\end{theorem}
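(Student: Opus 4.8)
The plan is to reduce every assertion to the exact order of growth of a single Hermite function in $M^1$ and in $M^\infty$. Write $V_\varphi f(z)=\scal{f,\pi(z)\varphi}$ for the short-time Fourier transform against the Gaussian $\varphi=h_0$, so that $\norm{f}_{M^1}=\norm{V_\varphi f}_{L^1(\R^2)}$ (the $p=1$ case of the defining integral) and $\norm{f}_{M^\infty}=\norm{V_\varphi f}_{L^\infty(\R^2)}$. One has the classical formula $\abs{V_\varphi h_n(z)}=(n!)^{-1/2}(\pi|z|^2)^{n/2}e^{-\pi|z|^2/2}$, whence, integrating in polar coordinates and substituting $t=\pi|z|^2$,
\[
\norm{h_n}_{M^1}=\frac{1}{\sqrt{n!}}\int_0^\infty t^{n/2}e^{-t/2}\ud t=\frac{2^{n/2+1}\,\Gamma(n/2+1)}{\sqrt{n!}},
\]
which by Stirling's formula is comparable to $(1+n)^{1/4}$; likewise $\norm{h_n}_{M^\infty}=\max_z\abs{V_\varphi h_n(z)}=n^{n/2}e^{-n/2}(n!)^{-1/2}\asymp(1+n)^{-1/4}$. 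The theorem asserts that the weight $(1+n)^{1/4}$ appearing here is the optimal one, both from above and from below.

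The first statement, and one of the non-inclusions, follow quickly from these facts. If $\gamma\ge1/4$ and $f\in C(\gamma)$, then $\norm{h_n}_{M^1}\lesssim(1+n)^{1/4}\le(1+n)^\gamma$ shows that $\sum_n\scal{f,h_n}h_n$ converges absolutely in $M^1$ with $M^1$-norm $\le C\norm{f}_{C(\gamma)}$; since $M^1\hookrightarrow L^2$ and the Hermite series of $f$ converges to $f$ in $L^2$, its sum is $f$, so $\norm{f}_{M^1}\le C\norm{f}_{C(\gamma)}$. Conversely, testing on $f=h_n$ gives $\norm{h_n}_{M^1}/\norm{h_n}_{C(\gamma)}\asymp(1+n)^{1/4-\gamma}$, which is unbounded when $\gamma<1/4$; this proves the ``only if'' part, and, via the closed graph theorem (applied to the complete space $C(\gamma)$ when $\gamma\ge0$ and to $C(0)\subseteq C(\gamma)$ when $\gamma<0$), it also shows that $C(\gamma)\not\subseteq M^1$ for every $\gamma<1/4$.

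The ``if'' direction of the second statement is the first genuinely analytic point. It suffices to prove $\norm{f}_{C(-1/4)}\le C\norm{f}_{M^1}$ for $f\in M^1$, since $\norm{f}_{C(\gamma)}\le\norm{f}_{C(-1/4)}$ whenever $\gamma\le-1/4$. By Moyal's identity $\scal{f,h_n}=\scal{V_\varphi f,V_\varphi h_n}_{L^2(\R^2)}$, hence $\abs{\scal{f,h_n}}\le\int_{\R^2}\abs{V_\varphi f}\,\abs{V_\varphi h_n}$; summing against $(1+n)^{-1/4}$ and applying Tonelli's theorem, the estimate reduces to the uniform bound
\[
\sup_{z\in\R^2}\sum_{n\ge0}(1+n)^{-1/4}\abs{V_\varphi h_n(z)}<\infty.
\]
With $x=\sqrt\pi\,|z|$ the $n$-th term equals $(1+n)^{-1/4}\sqrt{p_n(x^2)}$, where $p_n(\lambda)=e^{-\lambda}\lambda^n/n!$ is the Poisson$(\lambda)$ mass function. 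Since, for large $x$, the distribution $n\mapsto p_n(x^2)$ concentrates on $n\asymp x^2$ in a window of width $\asymp x$ — on which $(1+n)^{-1/4}\asymp x^{-1/2}$ while $\sum_n\sqrt{p_n(x^2)}\asymp x^{1/2}$ — the bulk of the sum is $O(1)$ uniformly in $z$, and the Poisson deviation tails are $o(1)$. (The ``only if'' direction of the second statement then follows from the remaining non-inclusion.)

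It remains to show $M^1\not\subseteq C(\gamma)$ for $\gamma>-1/4$. The finite-rank operators $T_N\colon M^1\to\ell^1$, $T_Nf=\big(\scal{f,h_n}(1+n)^\gamma\big)_{n=0}^{N}$, are bounded, and since $M^\infty=(M^1)^*$,
\[
\norm{T_N}\ \ge\ \sup_{\norm{f}_{M^1}\le1}\Big|\sum_{n=0}^{N}\scal{f,h_n}(1+n)^\gamma\Big|\ =\ \Big\|\sum_{n=0}^{N}(1+n)^\gamma h_n\Big\|_{M^\infty}.
\]
Evaluating $V_\varphi$ of this partial sum at a point $z=(x,0)$ with $x>0$ — where every $V_\varphi h_n(z)$ equals the positive number $(n!)^{-1/2}(\pi x^2)^{n/2}e^{-\pi x^2/2}$ — and taking $\pi x^2=N$ gives
\[
\Big\|\sum_{n=0}^{N}(1+n)^\gamma h_n\Big\|_{M^\infty}\ \ge\ \sum_{n=0}^{N}(1+n)^\gamma\sqrt{p_n(N)}\ \gtrsim\ N^\gamma\!\!\sum_{N-c\sqrt N\le n\le N}\!\!\sqrt{p_n(N)}\ \asymp\ N^{\gamma+1/4},
\]
because $p_n(N)\asymp N^{-1/2}$ on an interval of length $\asymp\sqrt N$ just below the mean. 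Hence $\norm{T_N}\to\infty$ for $\gamma>-1/4$, and by the uniform boundedness principle some $f\in M^1$ has $\sum_n\abs{\scal{f,h_n}}(1+n)^\gamma=\sup_N\norm{T_Nf}_{\ell^1}=\infty$, i.e.\ $f\notin C(\gamma)$. The main obstacle throughout is the sharp asymptotic analysis behind the three comparabilities used above: each is a Laplace-type estimate for a sum $\sum_n c_n x^n/\sqrt{n!}$ whose terms, after writing $x^n/\sqrt{n!}=e^{x^2/2}\sqrt{p_n(x^2)}$, peak at $n\approx x^2$ with spread $\asymp x$, so that $e^{-x^2/2}\sum_n c_n x^n/\sqrt{n!}\asymp c_{\lfloor x^2\rfloor}\sqrt{x}$; the delicate choice is $c_n=(1+n)^{-1/4}$, for which this heuristic predicts exactly the borderline value $O(1)$ and an honest estimate of the tails is required.
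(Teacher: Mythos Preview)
The paper does not prove this theorem; it is quoted from Janssen \cite{Jan} as background, with no argument supplied. The only point of contact is Lemma~\ref{lem:m1normhermite} in the appendix, which computes $\|h_n\|_{M^1}\sim (8\pi)^{1/4}n^{1/4}$ exactly as you do in your opening paragraph.

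Your outline is essentially Janssen's own argument and is correct in structure, but one step is left as a heuristic. The inequality $\|f\|_{C(-1/4)}\lesssim\|f\|_{M^1}$ is reduced, via Moyal and Tonelli, to the uniform bound
\[
\sup_{\lambda\ge 0}\ \sum_{n\ge 0}(1+n)^{-1/4}\sqrt{p_n(\lambda)}<\infty,\qquad p_n(\lambda)=e^{-\lambda}\lambda^n/n!,
\]
and you only argue this by localisation near the Poisson mean, explicitly conceding that ``an honest estimate of the tails is required''. This is the one genuine gap: as written, nothing controls the contribution of $|n-\lambda|\gg\sqrt\lambda$. A clean way to close it is to split at $n=\lambda/2$; for $n>\lambda/2$ one has $(1+n)^{-1/4}\lesssim\lambda^{-1/4}$, so it suffices to show $\sum_n\sqrt{p_n(\lambda)}\lesssim\lambda^{1/4}$, which follows from Cauchy--Schwarz on dyadic shells $\{2^j\sqrt\lambda<|n-\lambda|\le 2^{j+1}\sqrt\lambda\}$ together with the sub-exponential Poisson deviation bound. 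The region $n\le\lambda/2$ is handled by the Chernoff tail directly.

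The remaining steps are fine. Your closed-graph reduction for $C(\gamma)\not\subset M^1$ is correct once one notes that $C(0)$ is complete (it is isometric to $\ell^1$) and embeds continuously in $L^2$, so the graph is closed there; for $\gamma<0$ the inclusion $C(0)\subset C(\gamma)$ then finishes it as you say. The uniform-boundedness argument for $M^1\not\subset C(\gamma)$ when $\gamma>-1/4$ is correct; the positivity of $V_\varphi h_n$ on the real axis that you use is exactly the identity \eqref{eq:idenherm} specialised to $w$ real.
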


As a consequence, this shows that
convergence in $M^1(\R)$ cannot be fully described in terms of the coefficients in the Hermite basis.
Here we prove that Hermite series do not converge in $M^1(\R^d)$ and therefore, by duality, also not in $M^\infty(\R)$
However, when $p$ is in the intermediate range, $1<p<\infty$,  the series converges in $M^p(\R^d)$.

Let us define the partial Hermite expansion operator via
$$
S_Nf:=\sum_{n=0}^N\langle f, h_n\rangle h_n.
$$
The main result of this paper is the following:

\begin{theorem}
\label{thm:hermiteins0}
    Let $1\leq p\leq \infty$. There exists a  constant $C=C_{p}$ such that for every $N\in\N$ 
    \begin{equation}\label{eq:conv-Mp}
        \big\|S_Nf\big\|_{M^p(\R)}\leq C\|f\|_{M^p(\R)},\qquad f\in M^p(\R),
    \end{equation}  
    if and only if $1<p<\infty$.
For $p=1$, there exist   constants $c,C>1$ such that 
\begin{equation}\label{eq:non-unif}
   \big\|S_N  \big\|_{M^1(\R)\to M^1(\R)}\ge C\log(c N) ,
\end{equation}
In particular, 
\begin{enumerate}
    \item for every $f\in M^p(\R),\ 1<p<\infty$, $\lim_{N\to \infty}S_Nf= f$
    in $M^p(\R^d)$
\item there exists $f\in M^1(\R) $ such that 
    $\|S_Nf\|_{M^2(\R)}\to\infty$ so that $S_Nf$ does not converge to $f$ in $M^1(\R)$.
    \end{enumerate}
\end{theorem}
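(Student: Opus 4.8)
The plan is to transport the whole problem to the Fock space via the Bargmann transform (equivalently, by taking short‑time Fourier transforms against $\varphi$). This identifies $M^p(\R)$ with the Fock space $\mathcal F^p=\{F\ \text{entire}:\int_{\C}|F(z)|^pe^{-p\pi|z|^2/2}\,\mathrm dz<\infty\}$ (the norms agreeing up to a fixed constant; with the usual sup‑modification for $p=\infty$), carries $h_n$ to the monomial $e_n(z)=(\pi^n/n!)^{1/2}z^n$, and hence turns $S_N$ into the Taylor truncation $T_NF(z)=\sum_{n=0}^N\widehat F(n)z^n$, where $\widehat F(n)$ is the $n$‑th Taylor coefficient. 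Thus \eqref{eq:conv-Mp} becomes the uniform boundedness of $(T_N)_N$ on $\mathcal F^p$, and \eqref{eq:non-unif} a logarithmic lower bound for $\|T_N\|_{\mathcal F^1\to\mathcal F^1}$. The identity that drives everything is the rotation formula
\begin{equation}\label{eq:rotation}
T_NF(z)=\frac1{2\pi}\int_0^{2\pi}F(e^{i\theta}z)\,\overline{D_N(\theta)}\,\mathrm d\theta,
\qquad D_N(\theta):=\sum_{k=0}^N e^{ik\theta},
\end{equation}
which holds because $\frac1{2\pi}\int_0^{2\pi}F(e^{i\theta}z)e^{-ik\theta}\,\mathrm d\theta=\widehat F(k)z^k$. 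Since $e^{-\pi|z|^2/2}$ and Lebesgue measure on $\C$ are rotation invariant, the map sending $F$ to $z\mapsto F(e^{i\theta}z)$ is an isometry of $\mathcal F^p$; combining this with \eqref{eq:rotation} and Minkowski's integral inequality already gives $\|T_N\|_{\mathcal F^p\to\mathcal F^p}\le\|D_N\|_{L^1(\T)}\le C\log(N+2)$ for every $1\le p\le\infty$. For $p=1$ this matches the lower bound in \eqref{eq:non-unif} up to constants, so the growth there is precisely logarithmic; the real point is to decide when the logarithm disappears.

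For $1<p<\infty$ I would argue in polar coordinates. Writing $g_r(\theta):=F(re^{i\theta})$, Fubini gives $\|T_NF\|_{\mathcal F^p}^p=c_p\int_0^\infty\big(\int_0^{2\pi}|T_NF(re^{i\theta})|^p\,\mathrm d\theta\big)e^{-p\pi r^2/2}\,r\,\mathrm dr$, and since $g_r$ has only non‑negative frequencies, $\theta\mapsto T_NF(re^{i\theta})$ is exactly the $N$‑th Fourier partial sum $S_N^{\T}g_r$. By the M.\,Riesz theorem $\sup_N\|S_N^{\T}\|_{L^p(\T)\to L^p(\T)}=C_p<\infty$ for $1<p<\infty$, and as $\int_0^\infty\|g_r\|_{L^p(\T)}^p e^{-p\pi r^2/2}r\,\mathrm dr=c_p^{-1}\|F\|_{\mathcal F^p}^p$, this yields $\|T_NF\|_{\mathcal F^p}\le C_p\|F\|_{\mathcal F^p}$, hence \eqref{eq:conv-Mp}.

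The matching failure for $p=1$ — i.e. the lower bound in \eqref{eq:non-unif} — is the step I expect to be the main obstacle: for each $N$ one must produce $F_N\in\mathcal F^1$ with $\|T_NF_N\|_{\mathcal F^1}\ge c\log(cN)\,\|F_N\|_{\mathcal F^1}$. The natural guesses do not work as they stand. A Dirichlet kernel (or its square) transplanted to the unit disc — where the logarithmic $L^1$‑cancellation really lives — sits at the "wrong" radius: the Gaussian weight concentrates the $\mathcal F^1$‑norm near a single radius of order $\sqrt N$, and dilating the test function to move its mass there destroys the frequency/radius balance, so the truncation either acts almost trivially or wipes out the gain; the same happens for superpositions $\int k_w\,\mathrm d\mu(w)$ of normalised reproducing kernels $k_w(z)=e^{\pi\bar w z-\pi|w|^2/2}$, each of which $T_N$ maps essentially to a half‑bump again. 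One is therefore forced into a genuinely Fock‑intrinsic, essentially multi‑scale construction; it is presumably here that the appendix's upper bounds for the Zak transform of the Hermite functions enter, since they are what make the corresponding $M^1$‑norm estimates effective. The case $p=\infty$ then follows from $(M^1)^*=M^\infty$ together with the $L^2$‑self‑adjointness of $S_N$.

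Granting this dichotomy, the two itemised statements are soft. For $1<p<\infty$: finite linear combinations of Hermite functions are dense in $M^p(\R)$ (they are dense in $\mathcal S(\R)$ for the Schwartz topology, hence in $M^p$ when $p<\infty$), $S_N$ fixes every such combination once $N$ is large, and $\sup_N\|S_N\|_{M^p\to M^p}<\infty$ by \eqref{eq:conv-Mp}; a $3\varepsilon$‑argument then gives $S_Nf\to f$ in $M^p(\R)$ for all $f$, and the $\R^d$ version follows by tensorising the one‑dimensional estimate. For $p=1$: \eqref{eq:non-unif} forces $\sup_N\|S_N\|_{M^1\to M^1}=\infty$, so by the Banach--Steinhaus theorem there is $f\in M^1(\R)$ with $\sup_N\|S_Nf\|_{M^1(\R)}=\infty$; in particular $S_Nf$ cannot converge to $f$ in $M^1(\R)$. (In item (2) the norm is presumably meant to be $M^1$ rather than $M^2$, since $M^2=L^2$ and $S_N$ is an orthogonal projection, so $\|S_Nf\|_{M^2}\le\|f\|_{L^2}$ is automatically bounded.)
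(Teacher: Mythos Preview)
Your treatment of the range $1<p<\infty$ is correct and essentially coincides with the paper's: both reduce the uniform boundedness of $S_N$ on $M^p(\R)$ to the Marcel Riesz inequality on $L^p(\T)$ via polar coordinates in the Bargmann/Fock picture (the paper packages this reduction as a quoted proposition relating Hermite multipliers on $M^p$ to Fourier multipliers on $L^p(\T)$, but the underlying mechanism is your rotation formula). The soft consequences in items (1) and (2) are also handled correctly, and your remark about the $M^2$ typo is apt.

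The genuine gap is the $p=1$ lower bound, which you explicitly leave open---and worse, you have talked yourself out of the correct test function. The paper uses precisely a single time--frequency shifted Gaussian $f_N=\pi(z_N)h_0$ with $z_N=\sqrt{N/\pi}$, which in your Fock language is a single normalised reproducing kernel $k_w$ with $|w|=\sqrt{N/\pi}$, and shows directly that $\|S_Nf_N\|_{M^1}\gtrsim\log N$. Writing this out,
\[
\|T_Nk_w\|_{\mathcal F^1}=\int_0^\infty t\,e^{-\pi(|w|^2+t^2)/2}\int_0^{2\pi}\Bigl|\sum_{n=0}^N\frac{(\pi|w|t)^n}{n!}e^{in\phi}\Bigr|\,\mathrm d\phi\,\mathrm dt,
\]
and the substantive step is the stand-alone estimate (Theorem~\ref{th:P_N} in the paper)
\[
\int_0^{2\pi}\Bigl|\sum_{n=0}^N\frac{s^n}{n!}e^{in\phi}\Bigr|\,\mathrm d\phi\ \ge\ \frac{s^N\log s}{N!}-\frac{30\,e^s}{\sqrt s},\qquad s\ge 1,
\]
proved by a second-order Abel resummation of the Poisson weights $e^{-s}s^n/n!$ together with $|1-e^{i\phi}|^{-1}\ge\phi^{-1}$. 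For $|w|^2=N/\pi$ and $t$ restricted to $|\sqrt\pi\,t-\sqrt N|\le 1$, Stirling gives $e^{-s}s^{N+1/2}/N!\gtrsim 1$, so the logarithmic term wins and yields $\gtrsim\log N$. Your intuition that ``$T_N$ maps $k_w$ essentially to a half-bump'' is exactly what fails at this critical radius: the Taylor coefficients of $k_w$ peak near $n\approx N$, and cutting there produces genuine Dirichlet-type cancellation in the angular variable. No multi-scale construction is needed, and the Zak-transform appendix plays no role in this proof---the paper states explicitly that it addresses an unrelated question and is included only for the arXiv version.
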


We may reinterpret Theorem~\ref{thm:hermiteins0} as a result about the convergence of the Taylor expansion for certain spaces of holomorphic functions. To make that connection, we first recall that the 
\emph{Bargmann transform} of a function $f$ is given by
$$
\mathcal{B}f(z)=2^{1/4}\int_\R f(t)e^{2\pi zt-\pi t^2+\pi z^2/2}\ud t,\quad z\in \C, 
$$
and note that the Bargmann transform is an isometric isomorphism between the modulation spaces $M^p(\R)$ and the 
\emph{(Bargmann-)Fock spaces}  
$$
\mathcal{F}^p(\C):=\Big\{F:\C\to \C \text{ holomorphic: }\|F\|_{\mathcal{F}^p(\C)}:=\int_\C |F(z)|^pe^{-\pi p|z|^2/2}dz<\infty\Big\}.
$$
 The subsequent corollary then follows directly from Theorem~\ref{thm:hermiteins0} once we note that 
 $$
 \mathcal{B}h_n(z)=\left(\frac{\pi}{n!}\right)^{n/2}z^n.
 $$
\begin{corollary}[Garling-Wojtaszczyk \cite{GaWo}, Lusky \cite{Lus}]
Let $1\leq p<\infty.$
The partial Taylor expansion
$$
T_NF(z)=\sum_{n=0}^N \frac{F^{(n)}(0)}{n!} z^n 
$$
converges for every $F\in \mathcal{F}^p(\C)$ with respect to $\|\cdot\|_{\mathcal{F}^p(\C)}$ if and only if $1<p<\infty$
\end{corollary}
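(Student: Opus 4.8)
The plan is to transport Theorem~\ref{thm:hermiteins0} to the Fock side through the Bargmann transform, which the excerpt has already identified as an isometric isomorphism of $M^p(\R)$ onto $\mathcal{F}^p(\C)$ carrying $h_n$ to the normalized monomial $\mathcal{B}h_n(z)=(\pi/n!)^{n/2}z^n$. The one point I would make explicit is the identification, for $f\in M^p(\R)$ and $F=\mathcal{B}f$, of the $n$-th Taylor coefficient of $F$: both $F\mapsto F^{(n)}(0)/n!$ and $f\mapsto(\pi/n!)^{n/2}\scal{f,h_n}$ are bounded linear functionals on $M^p(\R)$ — the former because coefficient evaluations are bounded on $\mathcal{F}^p(\C)$ (Cauchy estimates / the reproducing kernel), the latter because $h_n\in\mathcal{S}(\R)$ — and they coincide on each $h_m$ by the displayed formula for $\mathcal{B}h_m$; since finite Hermite combinations are dense in $M^p(\R)$ for $1\le p<\infty$ (equivalently, polynomials are dense in $\mathcal{F}^p(\C)$), the two functionals agree on all of $M^p(\R)$. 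Consequently $\mathcal{B}(S_Nf)=T_NF$ and, by the isometry,
\begin{equation*}
\norm{T_NF}_{\mathcal{F}^p(\C)}=\norm{S_Nf}_{M^p(\R)},\qquad F=\mathcal{B}f,
\end{equation*}
so the whole question reduces to the behaviour of $S_N$ on $M^p(\R)$.

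For the \emph{if} direction, let $1<p<\infty$. Theorem~\ref{thm:hermiteins0} gives $\sup_N\norm{S_N}_{M^p(\R)\to M^p(\R)}<\infty$, hence $\sup_N\norm{T_N}_{\mathcal{F}^p(\C)\to\mathcal{F}^p(\C)}<\infty$. Since $T_N$ fixes every polynomial of degree $\le N$ and polynomials are dense in $\mathcal{F}^p(\C)$, the standard uniform-boundedness-plus-density argument yields $T_NF\to F$ in $\mathcal{F}^p(\C)$ for every $F\in\mathcal{F}^p(\C)$.

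For the \emph{only if} direction it remains to rule out $p=1$. Conclusion (2) of Theorem~\ref{thm:hermiteins0} provides $f\in M^1(\R)$ with $\norm{S_Nf}_{M^2(\R)}\to\infty$; setting $F=\mathcal{B}f\in\mathcal{F}^1(\C)$ we obtain $\norm{T_NF}_{\mathcal{F}^2(\C)}\to\infty$. As the inclusion $\mathcal{F}^1(\C)\hookrightarrow\mathcal{F}^2(\C)$ is continuous, a $\mathcal{F}^1(\C)$-limit of $(T_NF)$ would in particular keep $\norm{T_NF}_{\mathcal{F}^2(\C)}$ bounded; hence $(T_NF)$ does not converge in $\mathcal{F}^1(\C)$, and convergence for all $F$ can hold only when $1<p<\infty$.

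There is essentially no genuine obstacle once the dictionary is in place, since the statement is a translation of Theorem~\ref{thm:hermiteins0}. The only step requiring a little care rather than mere bookkeeping is the coefficient identification in the first paragraph for $p\ne2$, where one cannot invoke an $L^2$ orthonormal expansion directly; it is handled, as indicated, by the continuity of the relevant functionals on $M^p(\R)$ together with the density of finite Hermite sums, both of which are standard facts about modulation (resp.\ Fock) spaces.
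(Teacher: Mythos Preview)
Your proof is correct and follows exactly the paper's approach: the paper offers no separate argument for the corollary beyond the remark that it follows directly from Theorem~\ref{thm:hermiteins0} via the Bargmann isometry $M^p(\R)\cong\mathcal{F}^p(\C)$ and the formula for $\mathcal{B}h_n$, and you have simply made the bookkeeping (coefficient identification, density, uniform boundedness) explicit. One incidental remark: the ``$M^2$'' in item~(2) of Theorem~\ref{thm:hermiteins0} is a typo for $M^1$---the Banach--Steinhaus step in the proof produces $f\in M^1(\R)$ with $\|S_Nf\|_{M^1(\R)}$ unbounded---so your detour through the embedding $\mathcal{F}^1(\C)\hookrightarrow\mathcal{F}^2(\C)$ is unnecessary, though harmless.
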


This corollary is not new and is of course  equivalent to Theorem \ref{thm:hermiteins0}.
Actually, the case $1<p<\infty$ can be found in \cite[Proposition 7]{GaWo} as well as \cite[Theorem 2.1]{Lus}
while the case $p=1$ is \cite[Theorem 2.3]{Lus}, though it is written in a slightly different language
and a more general form in \cite{Lus}. However, these results appear to have remained largely unnoticed within the time–frequency community\footnote{For instance, during discussions at a Quantum Harmonic Analysis workshop in Hannover in 2024 and at SampTA 2025 in Vienna, several colleagues mentioned this as an open problem.}. Beyond closing this knowledge gap, we also provide an alternative proof that is more direct and uses only elementary tools.
For the range $1<p<\infty$, our proof of \eqref{eq:conv-Mp} relies on \cite[Proposition~4.3]{multipliers} where a connection between boundedness of Hermite multipliers in $M^p(\R^d)$ and Fourier multipliers on $L^p(\T^d)$ was shown.
This idea is somewhat implicit in \cite{GaWo}. As a complementary result, this 
also allows us to study Bochner-Riesz means of Hermite series.
\begin{proposition}\label{prop:bochner-riesz}
    Let $1\leq p<\infty$ and $f\in M^p(\R)$. If $\alpha>0$, then the Bochner-Riesz means
    $$
    B_N^\alpha f:=\sum_{n=0}^N\left(1-\frac{|n|^2}{N^2}\right)^\alpha \langle f,h_n\rangle h_n
    $$
    converge to $f$
 in $M^p(\R)$ as $N\to \infty$. 
 \end{proposition}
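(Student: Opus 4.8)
The plan is to treat the intermediate range $1<p<\infty$ as a quick consequence of Theorem~\ref{thm:hermiteins0}, and then to handle $p=1$ — in fact all $1\le p<\infty$ at once — by transferring to the Fock space and reducing to a classical kernel estimate on the torus. For $1<p<\infty$ I would first note that, for fixed $N$, the sequence $n\mapsto(1-n^2/N^2)^\alpha$ is nonincreasing on $\{0,\dots,N\}$ and vanishes at $n=N$, so Abel summation rewrites $B_N^\alpha f=\sum_{n=0}^{N-1}\big((1-\tfrac{n^2}{N^2})^\alpha-(1-\tfrac{(n+1)^2}{N^2})^\alpha\big)S_nf$ as a telescoping convex combination of the partial Hermite sums. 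Hence $\norm{B_N^\alpha f}_{M^p(\R)}\le\big(\sup_n\norm{S_n}_{M^p(\R)\to M^p(\R)}\big)\norm{f}_{M^p(\R)}$, and the supremum is finite by \eqref{eq:conv-Mp}. Together with the density of finite Hermite sums in $M^p(\R)$ and the fact that $B_N^\alpha$ multiplies $\scal{f,h_n}$ by $(1-n^2/N^2)^\alpha\to1$, a $3\eps$-argument then yields $B_N^\alpha f\to f$ in $M^p(\R)$.

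For the remaining case $p=1$ (and uniformly for all $1\le p<\infty$) I would transfer to the Fock space via the Bargmann transform. Since $\mathcal{B}h_n$ is a nonzero multiple of $z^n$, if $F=\mathcal{B}f=\sum_{n\ge0}a_nz^n$ then $\mathcal{B}(B_N^\alpha f)(z)=\sum_{n=0}^N(1-n^2/N^2)^\alpha a_nz^n$. Writing the $\mathcal{F}^p$-norm in polar coordinates, $\norm{F}_{\mathcal{F}^p(\C)}^p=\int_0^\infty\Big(\int_0^{2\pi}|F(re^{i\theta})|^p\ud\theta\Big)e^{-p\pi r^2/2}r\ud r$, and observing that for each fixed $r$ the function $\theta\mapsto F(re^{i\theta})$ is smooth on $\T$ with Fourier coefficients $(a_nr^n)_{n\ge0}$ supported in $n\ge0$, one sees that $B_N^\alpha$ acts on each angular slice as convolution with the Riesz kernel $R_N^\alpha(\theta):=\sum_{|n|\le N}(1-n^2/N^2)^\alpha e^{in\theta}$, the negative frequencies of $R_N^\alpha$ being irrelevant. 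Young's inequality on $\T$ followed by integration in $r$ then gives $\norm{B_N^\alpha f}_{M^p(\R)}\le\norm{R_N^\alpha}_{L^1(\T)}\,\norm{f}_{M^p(\R)}$ for every $1\le p<\infty$, and it remains to invoke the classical bound $\sup_N\norm{R_N^\alpha}_{L^1(\T)}<\infty$, valid for every $\alpha>0$: the kernel on $\R$ of the multiplier $(1-\xi^2/N^2)_+^\alpha$ is a rescaling of $|x|^{-\alpha-1/2}J_{\alpha+1/2}(|x|)$, which is integrable precisely for $\alpha>0$, and periodisation does not increase the $L^1$-norm. Combined with the density argument above, this settles $p=1$. (For $1<p<\infty$ the same uniform bound alternatively follows from \cite[Proposition~4.3]{multipliers}, since the even extension of $n\mapsto(1-n^2/N^2)_+^\alpha$ is a uniformly bounded Fourier multiplier on $L^p(\T)$.)

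The only genuinely technical ingredient is this last uniform $L^1(\T)$ estimate for the Riesz kernels, and within it the endpoint behaviour as $\alpha\downarrow0$: for large $\alpha$ the extra decay makes it immediate, while $\alpha>0$ is precisely the threshold for integrability of the Bessel-type kernel on the line. Everything else should be routine — the polar-coordinate transference is exact because the restriction of the entire function $F$ to a circle has an absolutely convergent Fourier series in $\theta$, the passage to a torus convolution is just the remark that convolving a one-sided spectrum against a two-sided kernel truncates it, and the step from uniform operator bounds to norm convergence is the usual density-plus-uniform-boundedness argument.
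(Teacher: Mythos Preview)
Your proof is correct. The paper's own argument is much terser: Proposition~\ref{prop:bochner-riesz} is the one-dimensional case of Proposition~\ref{prop:bochner-riesz-d}, whose proof is a one-line application of Proposition~\ref{multipliers} together with the classical fact that the Bochner--Riesz Fourier multipliers $m_N(n)=(1-n^2/N^2)_+^\alpha$ are uniformly bounded on $L^p(\T)$ for every $1\le p<\infty$ once $\alpha>0$ (quoted from \cite[Proposition~4.1.9]{grafakos}); convergence then follows from density.

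Your Fock-space route via polar coordinates and Young's inequality on $\T$ is not really a different argument: it is precisely the mechanism underlying Proposition~\ref{multipliers} (transference of Fourier multipliers on $\T$ to Hermite multipliers on $M^p$), specialised to the Bochner--Riesz symbol and unpacked by hand. The $L^1(\T)$-bound you extract, $\sup_N\|R_N^\alpha\|_{L^1(\T)}\le\|\widehat{(1-\xi^2)_+^\alpha}\|_{L^1(\R)}<\infty$ for $\alpha>0$, is exactly what makes the cited Grafakos result work at $p=1$. So the two proofs coincide in substance, yours being more self-contained and the paper's more concise. What is genuinely different in your write-up is the Abel-summation step for $1<p<\infty$: writing $B_N^\alpha f$ as a convex combination of the $S_n f$ and invoking \eqref{eq:conv-Mp} bypasses the multiplier machinery entirely and gives a pleasant direct reduction to Theorem~\ref{thm:hermiteins0}. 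The paper does not take this shortcut.
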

 
In this introduction, we  restricted our analysis to modulation spaces of functions on the real line. However, our result extends in a straightforward way to expansions in terms of tensor products of Hermite functions in arbitrary dimensions. We will discuss this in Section~\ref{sec:higher-dim}.

\smallskip

The remaining of the paper is organized as follows: Section~\ref{proof:main} is devoted to the proof of the main result while
Section~\ref{sec:higher-dim} is devoted to higher dimensional extensions. 

For this arxiv version, we also include a bound on the Zak transform in Appendix~\ref{app}
which answers a question originally asked by H. Bahouri to one of us.
This is an elementary consequence of well known results and techniques in Gabor analysis.
The same result has since been proved by different methods by H. Bahouri and V. Fischer in
an upcoming paper.

\section{Proof of Main Results}\label{proof:main}

Our aim here is to prove Theorem \ref{thm:hermiteins0}.
Before we give the proof, we need to establish an auxiliary result on the $L^1$-norm of certain trigonometric polynomials that might be of independent interest. 

Let us define
\[
P_N(t,\phi)\;=\;\sum_{n=0}^N \frac{t^n  }{n!}e^{i\phi n}\qquad  t>0,\; \phi\in[0,2\pi].
\]

\begin{theorem}\label{th:P_N}
For every $N\in\N_0$ and $t\geq 1$ it holds
\begin{equation}\label{eq:1}
   \frac{\log(  t)\, t^N}{N!}-  \frac{30\, e^t}{\sqrt{t} }
\leq  \int_0^{2\pi} \big|P_N(t,\phi)\big|\,\ud\phi\le \frac{\pi}{2}\, \frac{\log(\pi^2 \, t)\, t^N}{N!}+\frac{30\, e^t}{\sqrt{t}}
 . 
\end{equation}
\end{theorem}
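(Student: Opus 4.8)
The plan is to analyze $\int_0^{2\pi}|P_N(t,\phi)|\,\mathrm d\phi$ by isolating the dominant term of the sum and comparing with a model integral. Write $P_N(t,\phi)=\sum_{n=0}^N a_n e^{i\phi n}$ with $a_n=t^n/n!$. Since $t\ge 1$ the coefficients $a_n$ are increasing up to $n=\lfloor t\rfloor$ and the largest coefficient in the partial sum is $a_N=t^N/N!$ whenever $N\le t$; when $N>t$ the tail beyond $\lfloor t\rfloor$ is geometrically decaying, so in all cases the ``relevant'' coefficients near the top index behave multiplicatively. The key observation is that $P_N$ is, up to an error controlled by the full exponential series, a truncation of $e^{te^{i\phi}}$, whose modulus is $e^{t\cos\phi}$, and $\int_0^{2\pi} e^{t\cos\phi}\,\mathrm d\phi=2\pi I_0(t)\sim \sqrt{2\pi/t}\,e^t$. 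This explains the error terms $\tfrac{30 e^t}{\sqrt t}$ on both sides: they are exactly of the size of $\|e^{te^{i\phi}}-P_N(t,\phi)\|$-type quantities once one truncates and of $2\pi I_0(t)$ itself.

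For the upper bound I would proceed as follows. First, by the triangle inequality split $P_N$ into the single top term $a_N e^{iN\phi}$ plus the rest $Q_{N-1}(t,\phi)=\sum_{n=0}^{N-1} a_n e^{in\phi}$; but this is too lossy. Instead the right approach is a summation-by-parts / Abel rearrangement that extracts $a_N$ times a Dirichlet-type kernel. Concretely, writing $a_n = a_N \prod_{k=n+1}^N (k/t)$ shows $a_n/a_N = \prod_{k=n+1}^{N}(k/t)$; when $N\le t$ these ratios are $\le 1$ and decay, and when $N> t$ one still has good decay away from $n\approx t$. One then bounds $\int_0^{2\pi}|P_N|$ by $a_N \int_0^{2\pi}|\sum_{n=0}^N (a_n/a_N)e^{in\phi}|\,\mathrm d\phi$ and recognizes the inner integral as the $L^1$-norm of a trigonometric polynomial with monotone (or nearly monotone) coefficients bounded by $1$. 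For such polynomials with coefficients $b_0\le b_1\le\cdots\le b_N\le 1$ one has the classical estimate $\int_0^{2\pi}|\sum b_n e^{in\phi}|\,\mathrm d\phi \le c\log N$ via comparison with the Fejér/Dirichlet kernel; tracking constants carefully (splitting $\phi$ near $0$, where $|P_N|\le \sum a_n/a_N \le C$, versus $\phi$ bounded away from $0$, where one uses the geometric-sum bound $|\sum_{n=0}^N r^n e^{in\phi}|\le 2/|1-e^{i\phi}|$ after Abel summation) yields the factor $\tfrac\pi2\log(\pi^2 t)$ together with an absolute additive error that one absorbs into $\tfrac{30e^t}{\sqrt t}$ using $t^N/N!\le e^t$.

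For the lower bound, I would compare $P_N$ with $e^{te^{i\phi}}$: we have
\[
\int_0^{2\pi}|P_N(t,\phi)|\,\mathrm d\phi \;\ge\; \int_0^{2\pi}\bigl|e^{te^{i\phi}}\bigr|\,\mathrm d\phi \;-\; \int_0^{2\pi}\Bigl|\sum_{n=N+1}^\infty \frac{t^n}{n!}e^{in\phi}\Bigr|\,\mathrm d\phi,
\]
but the full exponential gives $2\pi I_0(t)$, not the desired $\log(t)\,t^N/N!$, so this only helps when $t^N/N!$ is small compared to $e^t$; in the complementary regime $N\lesssim t$ one needs a genuine lower bound on the $L^1$-norm of the truncated polynomial. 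Here I would use the standard fact that for a trigonometric polynomial $\sum_{n=0}^N b_n e^{in\phi}$ the $L^1$-norm is at least $c\,(\sum |b_n|)/\log N$ is \emph{false}; rather, one uses that the $L^1$-norm dominates the largest partial-sum projection, or more efficiently that $\int|P_N|\ge |\widehat{K\ast P_N}(0)|$ for a suitable kernel $K$ of $L^\infty$-norm $1$; choosing $K$ to be a conjugate-Dirichlet-type kernel picks out $\sum_{n} a_n/n \gtrsim a_N\sum_{n\le N} (a_n/a_N)/n$, and the sum $\sum_{n\le N}(a_n/a_N)/n$ is $\gtrsim \log$(number of top coefficients of comparable size) $\gtrsim \log t$ when $N\ge t$ (the coefficients with $n\in[t/2,t]$ are all within a bounded factor of $a_N$ when $N\approx t$, contributing $\gtrsim\log 2$; to get the full $\log t$ one instead sums over $n\in[1,N]$ using $a_n/a_N\ge$ the geometric comparison). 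I expect the honest $\log t$ lower bound to come from testing against the function $\mathrm{sgn}$-like multiplier $\sum_n \tfrac{1}{n+1}e^{-in\phi}$ (an $L^1$ function whose Fourier coefficients are $1/(n+1)$), giving $\int_0^{2\pi} P_N(t,\phi)\overline{g(\phi)}\,\mathrm d\phi = \sum_{n=0}^N \frac{a_n}{n+1}$ with $\|g\|_\infty$ bounded, hence $\int|P_N|\ge c\sum_{n=0}^N \frac{t^n}{(n+1)!}$; bounding the terms $n$ near $N$ from below by $\frac{t^N}{N!}\cdot\frac{1}{N}\cdot(\text{geometric factors})$ and summing the geometric-type series over a range of length $\sim t$ produces the claimed $\log(t)\,t^N/N!$.

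The main obstacle I anticipate is the \textbf{lower bound with the correct $\log t$ factor and clean constants}: one must handle uniformly the two regimes $N\le t$ and $N>t$ (in the latter the polynomial is effectively close to the entire function $e^{te^{i\phi}}$ and the $t^N/N!$ term is exponentially small, so the stated bound is easy; the delicate regime is $N\approx t$, where both terms on the left of \eqref{eq:1} are comparable), and one must produce a test function $g\in L^\infty$ with the right Fourier coefficients and an explicit bound on $\|g\|_\infty$ sharp enough to yield the constant $1$ in front of $\log t\cdot t^N/N!$ after subtracting the $30e^t/\sqrt t$ slack. The upper bound, by contrast, is a fairly standard (if constant-sensitive) Dirichlet-kernel estimate for polynomials with controlled coefficient ratios, and the error terms $30e^t/\sqrt t$ give ample room.
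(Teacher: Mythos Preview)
Your upper-bound sketch is in the right spirit, but your lower-bound strategy has a genuine gap. The test function you propose, $g(\phi)=\sum_{n\ge 0}\tfrac{1}{n+1}e^{-in\phi}$, is \emph{not} in $L^\infty(\T)$: it equals $-e^{i\phi}\log(1-e^{-i\phi})$ and has a logarithmic singularity at $\phi=0$, so the duality $\int|P_N|\ge \|g\|_\infty^{-1}\bigl|\int P_N\bar g\bigr|$ is unavailable. Even setting that aside, the quantity you would extract, $\sum_{n=0}^N \tfrac{t^n}{(n+1)!}=\tfrac{1}{t}\sum_{m=1}^{N+1}\tfrac{t^m}{m!}$, is of order $e^t/t$ rather than $\log(t)\,t^N/N!$; in the critical regime $N\approx t$ only about $\sqrt{t}$ coefficients $a_n$ are comparable to $a_N$, so your heuristic ``summing the geometric-type series over a range of length $\sim t$'' overcounts and cannot produce a $\log t$ factor this way. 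Your honest acknowledgement that ``the delicate regime is $N\approx t$'' is correct, but the proposed mechanism does not resolve it.

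The paper's argument is quite different and treats the upper and lower bounds simultaneously. After normalizing by $e^{-t}$ so that the coefficients become Poisson probabilities $p_n(t)$, one multiplies the sum by $(1-e^{i\phi})^2$ (i.e.\ a \emph{double} Abel summation). The resulting coefficients are the second differences $r_n=p_n-2p_{n-1}+p_{n-2}$ together with two boundary terms at $n=N+1,N+2$. The interior second differences satisfy $\sum_{n\le N}|r_n|\lesssim t^{-1}$, since $r_n=p_n(t)\,\tfrac{(n-t)^2-n}{t^2}$ and one is essentially computing $\operatorname{Var}(X)+\mathbb{E}[X]$ for $X\sim\mathrm{Poisson}(t)$. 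The boundary terms contribute $|1-e^{i\phi}|\,p_N(t)+O(t^{-1})$. Dividing back by $|1-e^{i\phi}|^2$ and integrating over $\phi\in[1/\sqrt t,\pi]$ gives, via $|1-e^{i\phi}|\asymp\phi$, the factor $p_N(t)\int_{1/\sqrt t}^\pi\phi^{-1}\,\mathrm d\phi\asymp p_N(t)\log t$, with remainder $O(t^{-1})\int_{1/\sqrt t}^\pi\phi^{-2}\,\mathrm d\phi=O(t^{-1/2})$. The same identity, read with the reverse triangle inequality, gives the lower bound with constant $1$ in front of $\log t$; this is the step your proposal is missing.
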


\begin{proof}Note that \eqref{eq:1} holds trivially for $N=0$. We may therefore assume from now on that $N\geq 1$. We divide the proof
into several simple steps. We start with a preliminary step containing bounds
that we will need.

\medskip

\noindent{\em Step 1.} {\em Two simple bounds.}

\medskip

We will use the classical pointwise Stirling bound on the factorial
({\it see, e.g.,} \cite{Rob}) 
\begin{equation}
    \label{stirling}
\sqrt{2\pi n}\left(\frac{n}{e}\right)^n\leq n!\leq e\sqrt{n}\left(\frac{n}{e}\right)^n,
\end{equation}
valid for $n\geq 1$. 

  We will also need the following elementary geometric bound 
\[
\frac{2}{\pi}\, \phi\le |1-e^{i\phi}| = 2\sin\Big(\frac{\phi}{2}\Big) \le \phi,\qquad \phi\in[0,\pi].
\]
Equivalently, for \( \phi\in (0,\pi]\),
\begin{equation}\label{eq:basic}
\frac{1}{\phi}\le\frac{1}{|1-e^{i\phi}|} \le \frac{\pi}{2}\, \frac{1}{\phi}.
\end{equation}

\medskip

\noindent\textit{Step 2.} {\em Reformulation of the result.}

\medskip

Let us write
\[
p_n(t):= \frac{e^{-t}t^n}{n!},\qquad  n\in\N_0,
\]
so that \(\{p_n(t)\}_{n\ge0}\) are the Poisson\((t)\) probabilities and
\[
e^{-t}P_N(t,\phi)=\sum_{n=0}^N p_n(t)\,e^{in\phi}.
\]
Thus $e^t$ may be factored out in \eqref{eq:1} and
 it suffices to prove the inequality
\begin{equation}\label{eq:1refo}
 \frac{\log(  t )t^Ne^{-t}}{ N!}-\frac{30}{\sqrt{t}}  \le\int_0^{2\pi} e^{-t}\big|P_N(t,\phi)\big|\,\ud\phi
\le   \frac{\pi}{2}\frac{\log( \pi^2\, t )t^Ne^{-t}}{ N!}+\frac{30}{\sqrt{t}}.
\end{equation}

\medskip

\noindent\textit{Step 3.} {\em Re-summation.}
\medskip

For any finite sequence \(q_0,q_1,\dots,q_N\) and \(z\neq1\) one has
\begin{equation}\label{eq:ibp2}
(1-z)^2 \sum_{n=0}^N q_n z^n
= \sum_{m=0}^{N+2} r_m z^m,
\end{equation}
with
\begin{equation}
    \label{eq:ibp2bis}
    \begin{cases}
        r_0:=q_0, &\\  r_1:=q_1-2q_0,&\\
        r_m := q_m - 2q_{m-1} + q_{m-2},&\mbox{for }2\leq m\leq N,\\
        r_{N+1}:=-2q_N+q_{N-1},&\\ r_{N+2}:=q_N.
    \end{cases}
\end{equation}
Applying and rearranging \eqref{eq:ibp2} with \(q_n=p_n(t)\) and \(z=e^{i\phi}\) one obtains
\begin{equation}\label{eq:key}
\sum_{n=0}^N p_n(t) e^{in\phi}
= \frac{1}{(1-e^{i\phi})^2}\sum_{n=0}^{N+2} r_n(t) e^{in\phi},\qquad \phi\notin 2\pi\Z.
\end{equation}
Next, we define 
$
\phi_0:={1}/{\sqrt{t}},
$
and 
$$
I_N(t):=\int_{\phi_0}^\pi\left|\sum_{n=0}^N p_n(t)e^{in\phi}\right|\,\mbox{d}\phi
=\int_{\phi_0}^\pi\frac{1}{\left|1-e^{i\phi}\right|^2}\,\left|\sum_{n=0}^{N+2} r_n(t) e^{in\phi}\right|\,\mbox{d}\phi.
$$
Since 
$$
\left|\sum_{n=0}^N p_n(t) e^{in\phi}\right|\le  \sum_{n=0}^N |p_n(t)|\leq1,
$$
it follows immediately that 
\[
2 I_N(t)\leq \int_0^{2\pi}\left|\sum_{n=0}^N p_n(t) e^{in\phi}\right|\,\mbox{d}\phi
\le 2 I_N(t)+\frac{2}{\sqrt{t}}.
\]
It is thus enough to bound $I_N$ from below and above. To do so, we will now decompose the sum 
$ \sum_{n=0}^{N+2} r_n(t) e^{in\phi}$
into two parts: the terms $n=N+1,N+2$ which will be shown to be dominating and the remaining sum
over $0\leq n\leq N$ which turns out to be a remainder term.

\medskip

\noindent\textit{Step 4.} {\em Estimation of the dominating term.}

\medskip

We first write
\begin{align*}
   r_{N+1}(t)+e^{i\phi}r_{N+2}(t) 
   &=p_{N-1}(t)-2p_N(t)+p_{N}(t)e^{i\phi}
 \\ & =\bigl( e^{i\phi}-1\bigr)p_N(t)+p_{N-1}(t)-p_N(t)
   \\&=\bigl(e^{i\phi}-1\bigr)p_N(t)+ \frac{N-t}{t}  p_N(t).
\end{align*}
The absolute value of the second term can be bounded as follows
\begin{equation}\label{eq:3}
p_N(t)\frac{|N-{t}|}{t}\leq  \frac{1}{t}\cdot\sup_{x>0}\big\{p_N(x)|N-x| \big\}\le \frac{1}{t} ,\quad t>0.
\end{equation}
To prove this bound, first note that the supremum is attained at one of the two local maxima $x_{\pm}=N+\dfrac{1}{2}\pm\sqrt{N+\dfrac{1}{4}}$. With the help of \eqref{stirling} we may bound
\begin{align*}
\frac{e^{-x_{+}}x_+^{N}}{N!} {|N-x_+|} 
& \le     e^{-1/2-\sqrt{N+1/4} } \left(1+\frac{1}{ N}\left(\frac{1}{2}+\sqrt{N+\frac{1}{4}}\right)\right)^N  \frac{ \sqrt{N+ {1}/{4 }}+ {1}/{2 } }{\sqrt{2\pi N}}   \notag
\\
 & \le     e^{-1/2-\sqrt{N+1/4}+N\log(1+\delta) }   , 
\end{align*}
where $\delta= \big( {1}/{2}+\sqrt{N+{1}/{4}} \big)/N $ and we used that $ \big({ \sqrt{N+ {1}/{4 }}+ {1}/{2 } }\big)/{\sqrt{2\pi N}}  \le 1$  for $N\ge 1$. Since $\log(1+\delta)\leq \delta$ for every $\delta>-1$, we   infer that 
 $$
\frac{e^{-x_{+}}x_1^{N}}{N!} {|N-x_1|} \leq 1.
$$
For $x_-$ we argue similarly, only that this time we set $\delta=\big( {1}/{2}-\sqrt{N+{1}/{4}} \big)/N>-1. $ 
 
Using  
the lower and upper triangular inequalities then leads to
the bounds
\begin{equation}
    \label{eq:dominating}
 \bigl|1-e^{i\phi}\bigr|p_N(t) - \frac{1}{t} \le  {\bigl|r_{N+1}(t)+e^{i\phi}r_{N+2}(t)\bigr|
  }   \leq    \bigl|1-e^{i\phi}\bigr|p_N(t)+ \frac{1}{t} 
.
\end{equation}

\medskip

\noindent\textit{Step 5.} {\em Estimation of the remainder term.}

\medskip

Note that, for $2\leq n\leq N$, \(r_n(t)\)  
 is the second finite difference  of the Poisson probabilities 
\(p_n(t)=e^{-t}t^n/n!\). A short computation  yields for \(2\leq n\leq N\)
\[
r_n(t) := p_n(t)-2p_{n-1}(t)+p_{n-2}(t)  =   p_n(t)\, \frac{(n-t)^2-n}{t^2}.
\]
Hence
\[
\sum_{n=2}^N \big|r_n(t)\big|
\leq \frac{1}{t^2}\sum_{n\ge0} p_n(t) \big|(n-t)^2-n\big|
= \frac{1}{t^2}\,\mathbb{E}\big[\,|(X-t)^2-X|\,\big],
\]
where \(X\sim\mathrm{Poisson}(t)\). Using the triangle inequality and the moments of \(X\) we get the crude bound
\[
\mathbb{E}\big[\,|(X-t)^2-X|\,\big]\le \mathbb{E}\big[(X-t)^2+X\big]
= \operatorname{Var}(X)+\mathbb{E}[X] = t+t = 2t.
\]
Therefore we obtain the simple bound
\begin{equation}\label{eq:sum-delta2}
\sum_{n=2}^N\big|r_n(t)\big| \le \frac{2}{t}.
\end{equation}
On the other hand
$$
|r_0(t)+e^{i\phi}r_1(t)|\leq |r_0(t)|+|r_1(t)|\leq 3|p_0(t)|+|p_1(t)|\leq (3+t)e^{-t},
$$
which is easily seen to be less than $ {2}/{t}$ for $t\geq 1$. Together with \eqref{eq:sum-delta2},
this implies that
\begin{equation}
    \label{eq:sum-delta2bis}
    \sum_{n=2}^{N} |r_n(t)|+|r_0(t)+e^{i\phi}r_1(t)|\leq\dfrac{4}{t}.
\end{equation}

\medskip

 \noindent\textit{Step 6.} {\em The   bounds in \eqref{eq:1refo}.}

\medskip

From \eqref{eq:basic}, \eqref{eq:key}, \eqref{eq:dominating}, \eqref{eq:sum-delta2bis} and the reverse triangle inequality we obtain
\begin{align}
&\left|\sum_{n=0}^N p_n(t) e^{in\phi}\right|\notag
\\ &\hspace{1cm}\ge \frac{1}{|1-e^{i\phi}|^2}\left(\big|r_{N+1}(t)+e^{i\phi}r_{N+2}(t) \big|-\sum_{n=2}^{N} |r_n(t)|
-|r_0(t)+e^{i\phi}r_1(t)|\right)\notag
\\
&\hspace{1cm}\ge \frac{1}{|1-e^{i\phi}|}p_N(t)-\frac{1}{|1-e^{i\phi}|^2 }\frac{5}{t} \geq \frac{1}{\phi}p_N(t)-\frac{5 \pi^2}{4\phi^2 t} ,\qquad \phi_0\leq \phi\leq \pi.
\label{lowerboundphi}
\end{align}
Similarly, applying the triangle inequality instead of the reverse triangle inequality and the upper instead of the lower bound in \eqref{eq:basic} yields 
\begin{align}
&\left|\sum_{n=0}^N p_n(t) e^{in\phi}\right|\notag
\le  \frac{\pi}{2\phi}p_N(t)+\frac{5 \pi^2}{4\phi^2 t} ,\qquad \phi_0\leq \phi\leq \pi.
\notag
\end{align}

Integrating over $\phi$ thus gives
\begin{align*}
I_{N}(t)
&\ge  \int_{\phi_0}^{\pi}\left[ \frac{1}{\phi}p_N(t)-\frac{5\pi^2}{ 4\phi^2t}\right]\ud\phi
= \big(\log(\pi)-\log(\phi_0)\big)p_N(t)-\frac{5\pi^2}{4t} \left(\frac{1}{\phi_0}-\frac{1}{\pi}\right)
\\
&\ge  \frac{1}{2}\log(t)p_N(t)-\frac{5\pi^2}{4\sqrt{t}} \geq \frac{1}{2}\log(t)p_N(t)-\frac{15}{\sqrt{t}} ,
\end{align*}
where we substituted \(\phi_0=1/\sqrt{t}\) in the final step and removed useless positive terms. 
Moreover,
\begin{align*}
I_{N}(t)
&\le  \int_{\phi_0}^{\pi}\left[ \frac{\pi}{2\phi}p_N(t)+\frac{5\pi^2}{ 4\phi^2t}\right]\ud\phi
= \frac{\pi}{2}\big(\log(\pi)-\log(\phi_0)\big)p_N(t)+\frac{5\pi^2}{4t} \left(\frac{1}{\phi_0}-\frac{1}{\pi}\right)
\\
&\le  \frac{\pi}{4}\log(\pi^2 t)p_N(t)+\frac{5\pi^2}{4\sqrt{t}} ,
\end{align*}
 which completes the proof once we note that    $5\pi^2/2+2\leq 30.$

 \end{proof}

To prove  Theorem~\ref{thm:hermiteins0} in the range $1<p<\infty$ we need   the following result which connects boundedness of Hermite multipliers on $M^p(\R^d)$ with boundedness of Fourier multipliers with the same symbol on $L^p(\T^d)$.

\begin{proposition}\label{multipliers}\textbf{\emph{(\cite[Proposition~4.3]{multipliers})}}
Let $1\leq p<\infty$,  $m:\Z^d\to \C$ and $g:\T^d\to \C$. If the  \emph{ Fourier multiplier}   $A_m$ given by $\widehat{A_mg}(n):=m(n)\widehat{g}(n),\ n\in\Z^d$, defines a bounded operator on $L^p(\T^d)$, then 
the \emph{Hermite multiplier} with symbol $m|_{\N_0^d}$ given by
$$
H_m f=\sum_{n\in\N_0^d}m(n)\langle f,h_n\rangle h_n,
$$
is bounded on $M^p(\R^d)$ with
$$ 
\|H_m f\|_{M^p(\R^d)}\lesssim \|A_m\|_{L^p(\mathbb{T}^d)\to L^p(\mathbb{T}^d)}\|f\|_{M^{p}(\R^d)}
$$
\end{proposition}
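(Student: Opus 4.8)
The plan is to prove Proposition~\ref{multipliers} by a transference argument that passes through the Bargmann transform. Since $\mathcal B$ is an isometric isomorphism from $M^p(\R^d)$ onto the Fock space $\mathcal F^p(\C^d)$ sending each Hermite function $h_n$, $n\in\N_0^d$, to a scalar multiple of the monomial $z^n=\prod_j z_j^{n_j}$, the Hermite multiplier $H_m$ is intertwined by $\mathcal B$ with the diagonal operator
\[
\tilde H_m\Big(\sum_{n\in\N_0^d}a_n z^n\Big)=\sum_{n\in\N_0^d}m(n)\,a_n\,z^n
\]
on $\mathcal F^p(\C^d)$, so it suffices to bound $\tilde H_m$ there. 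First I would reduce to the case where $F$ is a holomorphic polynomial; this is legitimate because such polynomials are dense in $\mathcal F^p(\C^d)$ for every $1\le p<\infty$. Then $\tilde H_m F$ is again a polynomial and every series below is a finite sum, and the estimate for general $F$ follows afterwards by density.

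The heart of the proof is a slicing identity for the Fock norm. Writing $z_j=r_je^{i\theta_j}$ and integrating in polar coordinates in each variable,
\[
\|F\|_{\mathcal F^p(\C^d)}^p=\int_{(0,\infty)^d}\bigg(\int_{[0,2\pi]^d}\big|F\big(r_1e^{i\theta_1},\dots,r_de^{i\theta_d}\big)\big|^p\,\ud\theta\bigg)\prod_{j=1}^d e^{-\pi p r_j^2/2}\,r_j\,\ud r_j .
\]
For a fixed radius vector $r=(r_1,\dots,r_d)$, the function $F_r(\theta):=F(r_1e^{i\theta_1},\dots,r_de^{i\theta_d})$ is a trigonometric polynomial on $\T^d$ whose Fourier coefficients are $\widehat{F_r}(n)=a_n r^n$, with $r^n:=\prod_j r_j^{n_j}$, for $n\in\N_0^d$ and zero otherwise; consequently $A_m$ acts on this slice exactly as $\tilde H_m$ acts on $F$, i.e.\ $A_mF_r=(\tilde H_mF)_r$. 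Applying the assumed operator norm $\|A_m\|_{L^p(\T^d)\to L^p(\T^d)}$ to each slice, multiplying by the Gaussian weight $\prod_j e^{-\pi p r_j^2/2}r_j$, and integrating over $r\in(0,\infty)^d$, the slicing identity applied on both sides yields
\[
\|\tilde H_m F\|_{\mathcal F^p(\C^d)}\le \|A_m\|_{L^p(\T^d)\to L^p(\T^d)}\,\|F\|_{\mathcal F^p(\C^d)} ,
\]
which is in fact a touch stronger than the stated conclusion since no implicit constant is needed. Transporting this back through $\mathcal B\inv$ gives the inequality on $M^p(\R^d)$.

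I do not expect a genuine obstacle here: the computational core — the polar slicing and the slicewise application of $A_m$ — is routine, and the care lies entirely in soft points. One should confirm that holomorphic polynomials are dense in $\mathcal F^p(\C^d)$ for all $1\le p<\infty$ (so that both the reduction and the density extension are valid), that the polar factorisation of the Fock norm holds as written, and that for a general $f\in M^p(\R^d)$ the bounded operator obtained by density coincides with $f\mapsto\sum_n m(n)\langle f,h_n\rangle h_n$ — for the use made in this paper, where $m$ is an indicator and $S_Nf$ is a finite Hermite sum, even this last point is immediate. The only mild subtlety worth flagging is that $A_m$ involves $m$ on all of $\Z^d$ while $H_m$ only sees $m|_{\N_0^d}$; this is harmless because each slice $F_r$ is supported on nonnegative frequencies, so only $m|_{\N_0^d}$ ever enters the estimate. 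If one wished to bypass the Bargmann transform, the same polar-coordinate computation can be run directly in $M^p(\R^d)$, but the Fock-space picture is what makes the slicing transparent.
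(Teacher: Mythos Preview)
The paper does not give its own proof of this proposition; it is quoted from \cite[Proposition~4.3]{multipliers} and used as a black box in the proof of Theorem~\ref{thm:hermiteins0}. Your transference argument via the Bargmann transform and polar slicing is correct and is essentially the method of the cited source: for each fixed radius vector $r$, the slice $F_r$ is a trigonometric polynomial supported on $\N_0^d$, so $A_m F_r=(\tilde H_m F)_r$, and integrating the $L^p(\T^d)$ bound against the Gaussian weight reproduces the Fock norm. The only cosmetic point to flag is that, depending on the normalizations chosen, $\mathcal B:M^p(\R^d)\to\mathcal F^p(\C^d)$ is an isomorphism with equivalent rather than literally equal norms; this is why the statement carries a $\lesssim$ even though your Fock-space estimate has constant $1$. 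Your remark that only $m|_{\N_0^d}$ enters is exactly right and explains why the hypothesis is on $A_m$ acting on all of $L^p(\T^d)$ while the conclusion depends only on the restriction.
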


 We are now ready to give the proof of 
Theorem~\ref{thm:hermiteins0}.
\begin{proof}[Proof of Theorem \ref{thm:hermiteins0}]
Let us identify the phase space $\R^2$ with $\C$ via $z=(x,\omega)\in\R^2\sim z=x+i\omega\in\C.$ By the Laguerre connection (see, e.g., \cite[Theorem~1.104]{folland}), one has
$$
\langle  \pi(z)h_0,h_n\rangle =e^{\pi i x\xi-\pi |z|^2/2}\sqrt{\frac{\pi^n}{n!}} {z}^n,\qquad z\in\C,\ n\in\N_0.
$$
Therefore, for $w=y+i\eta$
\begin{equation}
\label{eq:idenherm}
    \sum_{n=0}^N \langle  \pi(z)h_0,h_n\rangle\langle h_n,\pi(w)h_0\rangle
    =e^{ \pi i (x\xi-y\eta)}e^{-\pi (|z|^2+|w|^2)/2}\sum_{n=0}^N \frac{(\pi z\overline{w})^n}{n!}
    ,
    \end{equation}
If we set $|z|=r$ and $|w|=t$, we obtain by changing to polar coordinates
 \begin{align*}
  \big\|S_N(\pi(z)h_0)\big\|_{M^1(\R)}  & =\int_{\C}\left|\sum_{n=0}^N\langle \pi(z)h_0,h_n\rangle \langle h_n,\pi(w)h_0\rangle\right|\,\mbox{d}w
 \\& =  \int_{\C}e^{-\pi ( |z|^2+|w|^2)/2}\left|\sum_{n=0}^N \frac{(\pi w\overline{z})^n}{n!}\right|\,\mbox{d}w 
  \\& =  \int_{0}^\infty te^{-\pi ( r^2+t^2)/2}\int_0^{2\pi}\left|\sum_{n=0}^N \frac{(\pi rt)^n}{n!}e^{i\phi n}\right|\,\mbox{d}\phi\,\mbox{d}t
    \\& =  \int_{0}^\infty te^{-\pi ( r^2+t^2)/2}\int_0^{2\pi}\left|P_N(\pi r t,\phi)\right|\,\mbox{d}\phi\,\mbox{d}t.
 \end{align*}
Now we set $z_N=\sqrt{N/\pi}$ and assume $N\geq 3$. Applying the change of variables $\rho=  \sqrt{\pi}t $ and   Theorem~\ref{th:P_N} then yields
  \begin{align*}
  \big\|S_N(\pi(z_N)h_0)&\big\|_{M^1(\R)}  =\frac{1}{\pi}\int_{0}^{\infty}\rho e^{-(\rho^2+N)/2}
\int_0^{2\pi}\left|P_N\big(\sqrt{N}\rho,\phi\big)\right|\,\mbox{d}\phi\,\mbox{d}\rho 
\\ &\ge\frac{1}{\pi}\int_{\sqrt{N}-1}^{\sqrt{N}+1}\rho e^{-(\rho^2+N)/2}
\int_0^{2\pi}\left|P_N\big(\sqrt{N}\rho,\phi\big)\right|\,\mbox{d}\phi\,\mbox{d}\rho
\\ &\ge \frac{1}{\pi}\int_{\sqrt{N}-1}^{\sqrt{N}+1}\rho e^{-(\rho^2+N)/2}
\left(\frac{\log(\rho\sqrt{N})(\rho\sqrt{N})^N}{N!}-\frac{30\,  e^{\rho\sqrt{N}}}{(\rho\sqrt{N})^{1/2}}\right)\mbox{d}\rho
.
 \end{align*}
Note that we were allowed to apply Theorem~\ref{th:P_N} since $\rho\sqrt{N}\geq 1$ for every $\rho\in[\sqrt{N}-1,\sqrt{N}+1]$ and $N\geq 3.$
 
 
Next we  prove the following auxiliary inequality: There exists $C>0$ such that for every $N\geq 3$ 
 \begin{equation}\label{eq:aux-inequ}
 \frac{e^{-t}t^{N+1/2}}{N!}\geq \frac{1}{4e},\qquad  |t-N|\leq\sqrt{N}.
 \end{equation}
 Note that the map $t\mapsto e^{-t}t^{N+1/2}$  has only one local maximum at  $t^\ast=N+1/2$. Therefore, to verify \eqref{eq:aux-inequ} it suffices to check the inequality for $t_{\pm}=N\pm\sqrt{N}$.
Using Stirling's approximation \eqref{stirling} we thus deduce  for $t_+$
$$
 \frac{e^{-N- \sqrt{N}}(N+\sqrt{N})^{N+1/2}}{N!}\geq e^{-\sqrt{N}-1}\left(1+ 1/\sqrt{N}\right)^{N +1/2} \geq \frac{1}{2e}.
$$
Similarly, for $t_-$
$$
 \frac{e^{-N+ \sqrt{N}}(N-\sqrt{N})^{N+1/2}}{N!}\geq e^{\sqrt{N}-1}\left(1- 1/\sqrt{N}\right)^{N +1/2} \geq \frac{1}{4e}.
$$
Consequently, 
    \begin{align*}
 \big\|S_N(\pi(z_N)h_0)\big\|_{M^1(\R)}  &\ge \frac{1}{\pi}\int_{\sqrt{N}-1}^{\sqrt{N}+1}\rho e^{-(\rho^2+N)/2}
\left(\frac{\log(\rho\sqrt{N}) }{4e}- {30  } \right)\frac{e^{\rho\sqrt{N}}}{(\rho\sqrt{N})^{1/2}}\mbox{d}\rho
\\
&\geq \frac{1}{4e\pi}\int_{\sqrt{N}-1}^{\sqrt{N}+1}\sqrt{\frac{\rho}{\sqrt{N}}} e^{-(\rho-\sqrt{N})^2/2}
 \log\left(\frac{\rho\sqrt{N}}{120e}\right)\mbox{d}\rho
\\ 
&\ge  \frac{1}{4e\pi}  \sqrt{1-\frac{1}{\sqrt{N}}}\log\left(\frac{N-\sqrt{N}}{120e}\right)\int_{-1}^{1}  e^{-t^2/2}
\mbox{d}t
\\&   \ge \frac{1}{4e^{3/2}\pi}  \log\left(\frac{N}{360e}\right) \gtrsim  {\log(cN)},
    \end{align*}
where we used that $N-\sqrt{N}\geq N/3$ for $N\geq 3$.  
Since $\|\pi(z)h_0\|_{M^1(\R)}=\|h_0\|_{M^1(\R)}$ for every $z\in\C$ it follows that $$\|S_N\|_{M^1(\R)\to M^1(\R)}\gtrsim \log(cN), $$
which  concludes the proof of \eqref{eq:non-unif}.
 The existence of an element $f\in M^1(\R)$ whose Hermite expansion does not converge in $M^1(\R)$ then follows from the Banach-Steinhaus theorem.

To prove the uniform boundedness of $S_N$ on $M^p(\R)$, $1<p<\infty,$ we first observe that the partial Fourier sum 
\begin{equation}\label{eq:partial-Fourier}
\sum_{-N\leq n\leq N}\widehat{f}(n) e^{2\pi i \xi\cdot n}, \qquad \xi\in [0,1],
\end{equation}
can be written as a Fourier multiplier $A_{m_N}$ with symbol $$m_N(n)=\chi_{[-N,N]}(n),\qquad n\in\Z.$$
It is well-known by the Marcel Riesz inequality that the partial Fourier sums  in \eqref{eq:partial-Fourier} are uniformly bounded on $L^p(\mathbb{T})$ which implies by Proposition~\ref{multipliers} that the family of Hermite multipliers $H_{m_N}$ are uniformly norm bounded on $M^p(\R)$. But $H_{m_N}$ is nothing else than $S_N$, which concludes the proof.
\end{proof}

\section{Convergence in higher dimensions}\label{sec:higher-dim}

In higher dimensions we   consider the tensor product Hermite functions, defined by \[h_n(t)=h_{(n_1,n_2,\dots,n_d)}(t_1,t_2,\dots,t_d)=\prod_{j=1}^d h_{n_j}(t_j),\qquad n\in\N_0^d,\ t\in\R^d.\]
Let us define the partial Hermite expansion operator via
$$
S_Nf:=\sum_{0\leq n_1,...,n_d\leq N}\langle f, h_n\rangle h_n.
$$

\begin{corollary}
\label{thm:hermiteins0-d}
    Let $1\leq p\leq \infty$. There exists a  constant $C=C_{p,d}$ such that for every $N\in\N$ 
    \begin{equation}\label{eq:conv-Mp-d}
        \big\|S_Nf\big\|_{M^p(\R^d)}\leq C\|f\|_{M^p(\R^d)},\qquad f\in M^p(\R^d),
    \end{equation}  
    if and only if $1<p<\infty$.
For $p=1$, there exists  a constant $c>1$ such that 
\begin{equation}\label{eq:non-unif-d}
   \big\|S_N  \big\|_{M^1(\R^d)\to M^1(\R^d)}\gtrsim_d \log(c N)^d ,
\end{equation}
In particular, for every $f\in M^p(\R^d),\ 1<p<\infty$,     $
    \lim_{N\to \infty}S_Nf= f
    $
    in $M^p(\R^d)$ and there exists $f\in M^1(\R^d) $ such that 
    $
    \lim_{N\to \infty}S_Nf\neq f
    $
    in $M^1(\R^d)$.
\end{corollary}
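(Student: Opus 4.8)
The plan is to reduce Corollary~\ref{thm:hermiteins0-d} to the one-dimensional Theorem~\ref{thm:hermiteins0} by exploiting the tensor-product structure of both the Hermite basis and the modulation spaces. The key fact I would use is that $M^p(\R^d)$ has a natural tensor-product description: the short-time Fourier transform with the Gaussian window $\varphi(t)=2^{d/4}e^{-\pi|t|^2}$ factors as a product over coordinates, so that $\norm{f}_{M^p(\R^d)}^p=\int_{\R^{2d}}\prod_{j=1}^d|V_{\varphi_1}(\cdot)|^p$, and the operator $S_N$ on $\R^d$ is exactly the $d$-fold tensor power $S_N^{(1)}\otimes\cdots\otimes S_N^{(1)}$ of the one-dimensional partial Hermite projection. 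Once this is set up, the positive direction is immediate: by Minkowski's integral inequality applied coordinate by coordinate (or by iterating the one-dimensional bound in the mixed-norm space $L^p$ of the last $2(d-1)$ variables), a bounded operator $T$ on $L^p(X)$ tensors up to a bounded operator $T\otimes\cdots\otimes T$ on $L^p(X^d)$ with norm at most $\norm{T}^d$. Applying this with the uniform bound $\norm{S_N^{(1)}}_{M^p(\R)\to M^p(\R)}\le C_p$ from Theorem~\ref{thm:hermiteins0} gives \eqref{eq:conv-Mp-d} with $C_{p,d}=C_p^d$, and the convergence $S_Nf\to f$ for $f\in M^p(\R^d)$, $1<p<\infty$, then follows since $S_Nf\to f$ holds on the dense subspace of finite linear combinations of Hermite functions and the operators are uniformly bounded.

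For the negative direction, the strategy is to produce a test function that is itself a pure tensor and to propagate the one-dimensional lower bound \eqref{eq:non-unif}. I would take $g_N=\pi(z_N)h_0\in M^1(\R)$ (with $z_N=\sqrt{N/\pi}$ as in the proof of Theorem~\ref{thm:hermiteins0}) and set $G_N=g_N\otimes\cdots\otimes g_N\in M^1(\R^d)$. Because $S_N$ on $\R^d$ is the tensor power of $S_N^{(1)}$ and the $M^1$-norm of a pure tensor is the product of the one-dimensional $M^1$-norms, one gets exactly
\[
\norm{S_N G_N}_{M^1(\R^d)}=\prod_{j=1}^d\norm{S_N^{(1)}g_N}_{M^1(\R)}=\big\|S_N^{(1)}(\pi(z_N)h_0)\big\|_{M^1(\R)}^d\gtrsim_d \big(\log(cN)\big)^d,
\]
while $\norm{G_N}_{M^1(\R^d)}=\norm{h_0}_{M^1(\R)}^d$ is a constant independent of $N$. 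This yields \eqref{eq:non-unif-d}. The failure of convergence for some $f\in M^1(\R^d)$ then follows from the Banach--Steinhaus theorem exactly as in dimension one: if $S_N$ were uniformly bounded on $M^1(\R^d)$, convergence would hold on all of $M^1(\R^d)$ since it holds on the dense span of Hermite functions, contradicting \eqref{eq:non-unif-d}; and by duality (or the same argument) $S_N$ is then also not uniformly bounded on $M^\infty(\R^d)$.

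The main obstacle, and the one point worth spelling out carefully, is the tensorization of the modulation-space norm and the identity $\norm{f_1\otimes\cdots\otimes f_d}_{M^p(\R^d)}=\prod_j\norm{f_j}_{M^p(\R)}$ together with the intertwining $S_N=S_N^{(1)}\otimes\cdots\otimes S_N^{(1)}$. Both are standard but need a clean statement: the first follows from the multiplicativity $V_\varphi(f_1\otimes\cdots\otimes f_d)(x,\omega)=\prod_j V_{\varphi_1}f_j(x_j,\omega_j)$ of the STFT under the Gaussian window and Fubini, and the second from the definition of the tensor Hermite basis and the fact that $\langle f_1\otimes\cdots\otimes f_d,h_n\rangle=\prod_j\langle f_j,h_{n_j}\rangle$ when $f=f_1\otimes\cdots\otimes f_d$. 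I would also note that the uniform-boundedness bound $C_p^d$ is not claimed to be sharp in $d$; only its finiteness matters for the convergence conclusion. With these two structural facts in hand, every remaining step is a verbatim repetition of the one-dimensional argument, so the proof is genuinely a short reduction rather than a new computation.
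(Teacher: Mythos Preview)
Your proposal is correct. For the negative direction ($p=1$) it is essentially identical to the paper's proof: both test on the pure tensor $\pi(z_N)h_0\otimes\cdots\otimes\pi(z_N)h_0$, use that $S_N$ acts as the $d$-fold tensor power of $S_N^{(1)}$ on such functions, and exploit multiplicativity of the $M^1$-norm on tensors to raise the one-dimensional lower bound to the $d$-th power; the Banach--Steinhaus conclusion is also the same.

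For the positive direction ($1<p<\infty$) your route differs from the paper's. The paper applies Proposition~\ref{multipliers} directly in $d$ dimensions, observing that $S_N$ is the Hermite multiplier with symbol $\chi_{[-N,N]}(n_1)\cdots\chi_{[-N,N]}(n_d)$ and citing the uniform $L^p(\T^d)$-boundedness of the cubical partial Fourier sums. You instead factor $S_N=(S_N^{(1)}\otimes I\otimes\cdots\otimes I)\circ\cdots\circ(I\otimes\cdots\otimes I\otimes S_N^{(1)})$ and iterate the one-dimensional bound from Theorem~\ref{thm:hermiteins0} via the mixed-norm (Fubini) structure of $\|V_\varphi f\|_{L^p(\R^{2d})}$. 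Your argument is slightly more self-contained, since it only reuses the $d=1$ instance of Proposition~\ref{multipliers} already absorbed into Theorem~\ref{thm:hermiteins0}; the paper's argument is shorter given that Proposition~\ref{multipliers} is stated for general $d$. Both give a bound of order $C_p^{\,d}$ and both yield the convergence statement by density plus uniform boundedness.
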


\begin{proof}
      For $d\geq 2$ one can simply consider the sequence $z_N=(\sqrt{N},...,\sqrt{N},0,...,0)\in \R^{2d}$ and observe that 
 \begin{align*}
  S_N\big(\pi(z_N)h_{(0,...,0)}\big)&=\sum_{0\leq n_1,...,n_d\leq N}  \prod_{k=1}^d \big\langle \pi\big(\sqrt{N},0\big)h_0,h_{n_k}\big\rangle h_{(n_1,...,n_d)}
  \\ &=\bigotimes_{k=1}^d S_N\big(\pi(\sqrt{N},0)h_0\big).
\end{align*}
Therefore, we obtain \eqref{eq:non-unif} as 
\begin{align*}
\big\|  S_N\big(\pi(z_N)h_{(0,...,0)}\big)\big\|_{M^1(\R^d)}&=  \big\|S_N\big(\pi\big(\sqrt{N},0\big)h_{0}\big)\big\|_{M^1(\R)}^d \\ &\gtrsim \log(CN)^d\|h_0\|_{M^1(\R)}^d=\log(CN)^d\|h_{(0,\dots,0)}\|_{M^1(\R^d)}.
\end{align*}
To prove the uniform boundedness of $S_N$ on $M^p(\R^d)$, $1<p<\infty,$ we first observe that the partial Fourier sum 
\begin{equation}\label{eq:partial-Fourier-d}
\sum_{-N\leq n_1,...,n_d\leq N}\widehat{f}(n) e^{2\pi i \xi\cdot n}, \qquad \xi\in [0,1]^d,
\end{equation}
can be written as a Fourier multiplier $A_{m_N}$ with symbol 
$$
m_N(n)=\chi_{[-N,N]}(n_1)\cdot ...\cdot\chi_{[-N,N]}(n_d),\qquad n\in\Z^d.
$$
It is well-known that the partial Fourier sums  in \eqref{eq:partial-Fourier-d} are uniformly bounded on $L^p(\mathbb{T}^d)$ (see, e.g., \cite[Section~4.1]{grafakos}) which implies by Proposition~\ref{multipliers} that the family of Hermite multipliers $H_{m_N}$ are uniformly norm bounded on $M^p(\R^d)$. But $H_{m_N}$ is nothing else than $S_N$, which concludes the proof.
\end{proof}

Our proof of \eqref{eq:conv-Mp} relies on \cite[Proposition~4.3]{multipliers} where a connection between boundedness of Hermite multipliers in $M^p(\R^d)$ and Fourier multipliers on $L^p(\T^d)$ was established. This  allows us to  also study Bochner-Riesz means of Hermite series.

Proposition~\ref{prop:bochner-riesz} is a special case of the subsequent result.

\begin{proposition}\label{prop:bochner-riesz-d}
    Let $1\leq p<\infty$, $f\in M^p(\R^d)$ and $(x)_+:=x\cdot\chi_{[0,\infty)}(x)$. If $\alpha>(d-1)\abs{\dfrac{1}{2}-\dfrac{1}{p}}$, then the Bochner-Riesz means
    $$
    B_R^\alpha f:=\sum_{{n\in\N_0^d} }\left(1-\frac{|n|^2}{R^2}\right)^\alpha_+ \langle f,h_n\rangle h_n
    $$
    converge to $f$
 in $M^p(\R^d)$ as $R\to \infty$. 
 \end{proposition}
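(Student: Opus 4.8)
The plan is to recognize $B_R^\alpha$ as a Hermite multiplier and then to invoke Proposition~\ref{multipliers}, exactly as in the proof of \eqref{eq:conv-Mp-d}, with the Marcel Riesz inequality replaced by the classical Bochner--Riesz theorem. First I would set, for $R>0$,
\[
m_R(n):=\Bigl(1-\frac{\abs{n}^2}{R^2}\Bigr)_+^{\alpha},\qquad n\in\Z^d,
\]
so that the restriction $m_R|_{\N_0^d}$ is precisely the symbol defining $B_R^\alpha$; in the language of Proposition~\ref{multipliers} this means $B_R^\alpha=H_{m_R}$. The associated Fourier multiplier $A_{m_R}$ on $L^p(\T^d)$ is then the $d$-dimensional periodic Bochner--Riesz operator of index $\alpha$ and radius $R$.

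Next I would show that $\sup_{R>0}\norm{A_{m_R}}_{L^p(\T^d)\to L^p(\T^d)}<\infty$ in the stated range $\alpha>(d-1)\abs{\frac12-\frac1p}$. Since $\alpha>0$, the function $\xi\mapsto(1-\abs{\xi}^2/R^2)_+^{\alpha}$ is continuous on $\R^d$, so by the standard transference principle for Fourier multipliers (see, e.g., \cite{grafakos}) one has $\norm{A_{m_R}}_{L^p(\T^d)\to L^p(\T^d)}\le\norm{S_R^{\alpha}}_{L^p(\R^d)\to L^p(\R^d)}$, where $S_R^{\alpha}$ is the Euclidean Bochner--Riesz operator; by a dilation the right-hand side does not depend on $R$. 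The classical Bochner--Riesz theorem provides $\sup_{R>0}\norm{S_R^{\alpha}}_{L^p(\R^d)\to L^p(\R^d)}<\infty$ exactly when $\alpha>(d-1)\abs{\frac12-\frac1p}$ (for $p=1$ this reads $\alpha>(d-1)/2$, the range in which the Bochner--Riesz kernel already lies in $L^1(\R^d)$). Plugging this into Proposition~\ref{multipliers} yields a constant $C=C_{p,d,\alpha}$ with
\[
\norm{B_R^{\alpha}f}_{M^p(\R^d)}\le C\,\norm{f}_{M^p(\R^d)},\qquad f\in M^p(\R^d),\ R>0.
\]

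It then remains to upgrade uniform boundedness to convergence. For a finite linear combination $f=\sum_{n\in F}c_n h_n$ with $F\subset\N_0^d$ finite, one has $B_R^{\alpha}f=\sum_{n\in F}(1-\abs{n}^2/R^2)_+^{\alpha}c_n h_n\to f$ in $M^p(\R^d)$ as $R\to\infty$, because each of the finitely many coefficients tends to $1$. Finite linear combinations of Hermite functions are dense in $M^p(\R^d)$ for $1\le p<\infty$ --- for instance because $\mathcal S(\R^d)$ is dense in $M^p(\R^d)$ and the Hermite expansion of a Schwartz function converges in the Schwartz topology, hence in $M^p(\R^d)$. A routine $\varepsilon/3$-argument combining this density with the uniform bound above then gives $B_R^{\alpha}f\to f$ in $M^p(\R^d)$ for every $f\in M^p(\R^d)$, which is the assertion; Proposition~\ref{prop:bochner-riesz} is the special case $d=1$.

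The bulk of this argument is bookkeeping; the one genuinely non-elementary input is the classical Bochner--Riesz estimate on $\R^d$ in the range $\alpha>(d-1)\abs{\frac12-\frac1p}$, so if one wanted a self-contained account the main work would be to reproduce its proof --- Stein's analytic interpolation between the trivial $L^2$ bound (valid for every $\alpha>0$) and the $L^1$ bound (valid for $\alpha>(d-1)/2$, where the kernel is integrable). The second point to watch is the transference step: one needs the multiplier to be continuous and to invoke the version of the transference theorem that passes to $\T^d$ with no loss in the operator norm, which is precisely why it matters that $\alpha>0$ rather than merely $\alpha\ge0$.
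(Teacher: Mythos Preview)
Your approach is essentially the paper's: identify $B_R^\alpha$ as the Hermite multiplier $H_{m_R}$, invoke Proposition~\ref{multipliers}, and use the uniform $L^p(\T^d)$-boundedness of the Bochner--Riesz multipliers in the range $\alpha>(d-1)\abs{\frac12-\frac1p}$. The paper's proof is a single sentence citing \cite[Proposition~4.1.9]{grafakos} directly for the torus bound, whereas you obtain it via transference from $\R^d$ and spell out the density/$\varepsilon/3$ step for convergence --- but these are elaborations of the same argument, not a different route.
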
 

 \begin{proof}
    Since the family of Fourier multipliers $A_{m_R}$ with $m_R(n)=(1-|n|^2/R^2)_+^\alpha$ is uniformly bounded  on $L^p(\T^d)$ whenever $\alpha>(d-1)\abs{\dfrac{1}{2}-\dfrac{1}{p}}$ (see, e.g., \cite[Proposition~4.1.9]{grafakos}), the result follows immediately from Proposition~\ref{multipliers}.
\end{proof}

\begin{remark}
Since \cite[Proposition~4.3]{multipliers} provides only a sufficient condition for the boundedness of Hermite multipliers, it remains an open problem whether certain symbols that generate unbounded Fourier multipliers also yield unbounded Hermite multipliers. A natural example to consider is Fefferman’s famous counterexample for the ball multiplier \cite{Feff}. This leads to the following question: for which range of 
$p$ does the sequence of spherical partial sums in the Hermite expansion,
$$
\overset{\circ}{S}_Nf:=\sum_{{n\in\N_0^d},\, {|n|\le N}}  \langle f,h_n\rangle h_n
$$
converge in $M^p(\R^d)$?
\end{remark}

\appendix

\section{Upper frame and Bessel bounds of Gabor systems associated to Hermite Functions}\label{app}

\begin{center}
    This appendix is not for final publication.
\end{center}

In this appendix, we answer a question raised to one of us by H. Bahouri, namely an upper bound
of the Zak transform of Hermite functions. While this result seems not written as such, it follows
from standard results and techniques in Gabor analysis. We have decided to write this appendix for the arxiv version of
the paper for eventual reference.

\subsection{Statement of results}

\begin{notation}
For $\Lambda\subset\R^2$ a lattice, we denote by 
    $$
    \text{rel}(\Lambda)=\max_{x\in\R^2}|\Lambda\cap(x+[0,1)^2)|
    $$
    the largest number of points of $\Lambda$ in a unit square. Note that   $\text{rel}(\Z^2)=1$.

The Gabor system generated by $h_n$ and $\Lambda$ is then defined by
    $$
    \cG(h_n,\Lambda)=\{\pi(\lambda) h_n\,:\ \lambda\in\Lambda\}.
    $$
\end{notation}

The aim of this section is to obtain an estimate of the upper frame bound and the upper Bessel bounds
of $\cG(h_n,\Lambda)$.

\begin{theorem}\label{thm:app}
    There exists a universal constant $C$ such that, for every lattice $\Lambda\subset \R^2$ and every $n\in\N_0$,
    \begin{enumerate}
        \item for every $f\in L^2(\R)$
        $$
        \sum_{\lambda\in\Lambda}\big|\scal{f,\pi(\lambda) h_n}\big|^2\leq C\,  \emph{rel}(\Lambda) (n+1)^{1/2}\|f\|^2_{2};
        $$
        \item for every $a=(a_\lambda)_{\lambda\in\Lambda}\in\ell^2(\Lambda)$,
        $$
        \norm{\sum_{\lambda\in\Lambda} a_\lambda\pi(\lambda) h_n}_{L^2(\R)}^2\leq C\, \emph{rel}(\Lambda) (n+1)^{1/2}\|a\|_{2}^2.
        $$
    \end{enumerate}
\end{theorem}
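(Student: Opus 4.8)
The plan is to establish assertion~(1) and to deduce~(2) from it by duality. Writing $V_gf(z):=\scal{f,\pi(z)g}$ for the short-time Fourier transform, the analysis operator $C_nf=\big(V_{h_n}f(\lambda)\big)_{\lambda\in\Lambda}$ and the synthesis operator $D_na=\sum_{\lambda\in\Lambda}a_\lambda\pi(\lambda)h_n$ are formal adjoints of one another; so once~(1) shows that $C_n$ is bounded from $L^2(\R)$ to $\ell^2(\Lambda)$, the sequence $\cG(h_n,\Lambda)$ is Bessel, $D_n$ is bounded, and $\norm{D_n}=\norm{C_n}$, which is exactly~(2) with the same constant. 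Everything thus comes down to bounding $\sum_{\lambda\in\Lambda}\abs{V_{h_n}f(\lambda)}^2$.

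The first ingredient I would use is a lattice-sampling estimate in Wiener amalgam spaces: for a continuous $F\colon\R^2\to\C$, setting $\norm{F}_{W}^2:=\sum_{k\in\Z^2}\sup_{z\in k+[0,1)^2}\abs{F(z)}^2$ (the $W(L^\infty,\ell^2)$-norm), the defining property of $\mathrm{rel}(\Lambda)$ — every translate of the unit cube contains at most $\mathrm{rel}(\Lambda)$ points of $\Lambda$ — gives, after grouping the $\lambda\in\Lambda$ by the integer cube containing them, $\sum_{\lambda\in\Lambda}\abs{F(\lambda)}^2\le\mathrm{rel}(\Lambda)\norm{F}_W^2$. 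The second ingredient is the amalgam mapping property of the STFT. Taking $\varphi=h_0$ (so $\norm{\varphi}_2=1$ and $\abs{V_\varphi\varphi(z)}=e^{-\pi\abs z^2/2}$), the inversion formula $f=\int_{\R^2}V_\varphi f(w)\pi(w)\varphi\,\ud w$ leads, after estimating the kernel $\abs{\scal{\pi(w)\varphi,\pi(z)h_n}}=\abs{V_\varphi h_n(w-z)}$, to the pointwise domination $\abs{V_{h_n}f}\le\abs{V_\varphi f}*\widetilde{\abs{V_\varphi h_n}}$, where $\widetilde G(z):=G(-z)$; the same computation applied to $h_n$ itself gives $\abs{V_\varphi h_n}\le\abs{V_\varphi h_n}*\abs{V_\varphi\varphi}$, so by Young's inequality for amalgam spaces $\norm{V_\varphi h_n}_{W(L^\infty,\ell^1)}\lesssim\norm{V_\varphi h_n}_{L^1}=\norm{h_n}_{M^1(\R)}$, while Cauchy--Schwarz on each unit cube gives $\norm{V_\varphi f}_{W(L^1,\ell^2)}\le\norm{V_\varphi f}_{L^2}=\norm f_2$. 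A further amalgam Young inequality, $W(L^1,\ell^2)*W(L^\infty,\ell^1)\hookrightarrow W(L^\infty,\ell^2)$, then yields $\norm{V_{h_n}f}_W\lesssim\norm{h_n}_{M^1(\R)}\norm f_2$ with a universal constant.

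Combining the two ingredients gives $\sum_{\lambda\in\Lambda}\abs{\scal{f,\pi(\lambda)h_n}}^2\lesssim\mathrm{rel}(\Lambda)\,\norm{h_n}_{M^1(\R)}^2\,\norm f_2^2$, so the theorem will follow once I know that $\norm{h_n}_{M^1(\R)}\lesssim(n+1)^{1/4}$. This is exactly the $M^1$-side of Janssen's theorem (the embedding $C(1/4)\hookrightarrow M^1(\R)$) applied to $h_n$, for which $\norm{h_n}_{C(1/4)}=(n+1)^{1/4}$. Should one want to keep the appendix self-contained, the Laguerre connection $\scal{\pi(z)h_0,h_n}=e^{\pi i x\xi-\pi\abs z^2/2}\sqrt{\pi^n/n!}\,z^n$ gives, in polar coordinates,
\[
\norm{h_n}_{M^1(\R)}=\sqrt{\tfrac{\pi^n}{n!}}\int_\C e^{-\pi\abs z^2/2}\abs z^n\,\ud z=\frac{2^{\,n/2+1}}{\sqrt{n!}}\,\Gamma\!\Big(\tfrac n2+1\Big),
\]
which is of order $(n+1)^{1/4}$ by the Stirling estimate~\eqref{stirling}.

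I do not expect a genuine obstacle: the argument only assembles standard Gabor-analytic facts (lattice sampling in amalgam spaces, the amalgam mapping property of the STFT, amalgam Young inequalities), and the single quantitative input is the growth rate $\norm{h_n}_{M^1(\R)}\asymp(n+1)^{1/4}$ of the Feichtinger norm of the Hermite functions — which also explains the exponent $1/2$ appearing in the statement. As a byproduct, specializing~(1) to $\Lambda=\Z^2$, where $\mathrm{rel}(\Lambda)=1$ and the analysis operator of $\cG(h_n,\Z^2)$ is, via the Zak transform, unitarily equivalent to multiplication by $Zh_n$ on $L^2([0,1)^2)$, recovers the bound $\norm{Zh_n}_\infty\lesssim(n+1)^{1/4}$ that motivated this appendix.
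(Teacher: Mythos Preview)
Your argument is correct and follows the same overall architecture as the paper: reduce the Bessel/synthesis bounds to $C\,\mathrm{rel}(\Lambda)\|h_n\|_{M^1}^2$, and then compute $\|h_n\|_{M^1}\asymp(n+1)^{1/4}$ via the Bargmann/Laguerre identity and Stirling. The only difference is one of packaging: where you unfold the amalgam-space machinery (lattice sampling in $W(L^\infty,\ell^2)$, the STFT convolution relation, amalgam Young) to reach the inequality $\sum_\lambda|\scal{f,\pi(\lambda)h_n}|^2\lesssim\mathrm{rel}(\Lambda)\|h_n\|_{M^1}^2\|f\|_2^2$, the paper simply quotes this as \cite[Lemma~2.1]{GL2}. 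Your route is thus a self-contained reproof of that cited lemma rather than a genuinely different strategy; the benefit is that your write-up does not rely on an external reference, while the paper's is shorter. The $M^1$-norm computation (your final display) matches the paper's Lemma~\ref{lem:m1normhermite} exactly.
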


Recall that the Zak transform of a function $f\in L^2(\R)$ is defined as
$$
Zf(x,y)=\sum_{k\in\Z}f(x+k)e^{2i\pi kx}.
$$
It is well-known that upper and lower bounds for the Zak transform of the window function $g$ are related to the frame bounds of the corresponding
Gabor system $\mathcal{G}(g,\Z^2)$. In particular,  from \cite[Corollary 8.3.2]{Gro}
$$
\sup_{\|f\|=1}\sum_{\lambda\in\Z^2}\big|\scal{f,\pi(\lambda)g}\big|^2=\|Zg\|_{L^\infty([0,1]^2)},
$$
we may  directly deduce the following bound\footnote{This answers a question posed by H. Bahouri to one of us, and we thank her for sparking our interest in the problem.}:

\begin{corollary}
We have
$$
\|Zh_n\|_{L^\infty([0,1]^2)}\lesssim (n+1)^{1/4}.
$$
\end{corollary}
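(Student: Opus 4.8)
The plan is to obtain the Corollary immediately from part~(1) of Theorem~\ref{thm:app} together with the identity quoted just above from \cite[Corollary~8.3.2]{Gro},
$$
\sup_{\norm{f}=1}\sum_{\lambda\in\Z^2}\abs{\scal{f,\pi(\lambda)h_n}}^2=\norm{Zh_n}_{L^\infty([0,1]^2)}^2,
$$
which identifies the squared supremum of the Zak transform with the optimal Bessel bound of the Gabor system $\cG(h_n,\Z^2)$. First I would specialise Theorem~\ref{thm:app}(1) to the integer lattice $\Lambda=\Z^2$, for which $\mathrm{rel}(\Z^2)=1$; this gives
$$
\sum_{\lambda\in\Z^2}\abs{\scal{f,\pi(\lambda)h_n}}^2\le C\,(n+1)^{1/2}\norm{f}^2,\qquad f\in L^2(\R),
$$
so that the optimal Bessel bound of $\cG(h_n,\Z^2)$ is at most $C(n+1)^{1/2}$. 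Inserting this into the displayed identity and taking square roots yields $\norm{Zh_n}_{L^\infty([0,1]^2)}\le \sqrt{C}\,(n+1)^{1/4}$, which is exactly the asserted bound.

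The only things to verify in this deduction are bookkeeping: that $h_n\in L^2(\R)$ so that the quoted identity is applicable (immediate, since $\norm{h_n}_{L^2(\R)}=1$), and that the normalisation conventions for $\pi(\lambda)$, for the Zak transform, and for the lattice $\Z^2$ agree with those of \cite{Gro} --- in particular that here $\abs{\det\Z^2}=1$, so no covolume factor enters. I do not expect any genuine obstacle at this step; all of the substance is contained in Theorem~\ref{thm:app}, which I am assuming.

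For context, the work in Theorem~\ref{thm:app} is the standard Gabor-analytic fact that the upper frame/Bessel bound of $\cG(g,\Lambda)$, and dually the upper synthesis bound, are controlled by $\mathrm{rel}(\Lambda)$ times the amalgam norm $\norm{V_gg}_{W(C_0,\ell^1)}$ of the ambiguity function; for $g=h_n$ the ambiguity function $V_{h_n}h_n$ is, up to a Gaussian factor and a constant, the Laguerre function $e^{-\pi\abs{z}^2/2}L_n(\pi\abs{z}^2)$, whose $W(C_0,\ell^1)$-norm grows like $(n+1)^{1/2}$ by the classical pointwise and integral estimates for Laguerre polynomials. The delicate point there is pinning down the sharp exponent $1/2$ --- the oscillatory part of $V_{h_n}h_n$ lives on the scale $\abs{z}\sim\sqrt{n}$, and an $\ell^1$-sum over unit cells of a bulk of amplitude $\sim n^{-1/4}\abs{z}^{-1/2}$ produces exactly $(n+1)^{1/2}$ --- but once Theorem~\ref{thm:app} is available the Corollary follows as above with essentially no further effort.
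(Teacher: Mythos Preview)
Your deduction is correct and is exactly the paper's argument: the Corollary is presented immediately after the Zak-transform/Bessel-bound identity with the remark that it follows ``directly'' from that identity together with Theorem~\ref{thm:app} specialised to $\Lambda=\Z^2$ (where $\mathrm{rel}(\Z^2)=1$). You have also correctly supplied the square on the right-hand side of the quoted identity --- the optimal Bessel bound is $\|Zg\|_{L^\infty}^2$, as in \cite[Corollary~8.3.2]{Gro} --- which is precisely what produces the exponent $1/4$ after taking a square root.
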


\subsection{The proof of Theorem~\ref{thm:app}}

We will use the basic properties of the Bargman transform that we recalled at the end of the introduction.
The key element of the proof is the following simple estimate of the $M^1(\R)$-norm of Hermite functions
that was already proved by A.E.J.M. Janssen:

\begin{lemma}\label{lem:m1normhermite}
    When $n\to\infty$,
    $$
    \|h_n\|_{M^1(\R)}=(2^3\pi)^{1/4}\,n^{1/4}\bigl(1+O(n^{-1})\bigr).
    $$
\end{lemma}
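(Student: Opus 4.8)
The plan is to reduce the statement to an exact evaluation of a Gaussian moment and then to Stirling's formula. Since the window $\varphi$ appearing in the definition of $M^1(\R)$ is exactly the Hermite function $h_0$, we have $\|h_n\|_{M^1(\R)}=\int_{\C}\big|\scal{h_n,\pi(z)h_0}\big|\,\ud z$, and the Laguerre connection recalled in the proof of Theorem~\ref{thm:hermiteins0} gives
$$
\big|\scal{h_n,\pi(z)h_0}\big|=e^{-\pi|z|^2/2}\sqrt{\frac{\pi^n}{n!}}\,|z|^n,\qquad z\in\C.
$$
So the modulus of the short-time Fourier transform is radial and the whole computation becomes one-dimensional.

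First I would pass to polar coordinates: writing $|z|=r$ and $\ud z=r\,\ud r\,\ud\phi$,
$$
\|h_n\|_{M^1(\R)}=2\pi\sqrt{\frac{\pi^n}{n!}}\int_0^\infty e^{-\pi r^2/2}\,r^{n+1}\,\ud r,
$$
and the substitution $u=\pi r^2/2$ turns the integral into $\pi^{-1}(2/\pi)^{n/2}\,\Gamma\!\left(\tfrac n2+1\right)$. After cancelling the powers of $\pi$ this leaves the closed-form identity
$$
\|h_n\|_{M^1(\R)}=\frac{2^{\,n/2+1}\,\Gamma\!\left(\tfrac n2+1\right)}{\sqrt{n!}}.
$$

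The remaining step is the asymptotics of this quotient. The cleanest way is to invoke the Legendre duplication formula $\Gamma\!\left(\tfrac n2+\tfrac12\right)\Gamma\!\left(\tfrac n2+1\right)=2^{-n}\sqrt{\pi}\,n!$, which gives
$$
\left(\frac{\Gamma\!\left(\tfrac n2+1\right)}{\sqrt{n!}}\right)^{\!2}=\frac{\sqrt{\pi}}{2^{n}}\cdot\frac{\Gamma\!\left(\tfrac n2+1\right)}{\Gamma\!\left(\tfrac n2+\tfrac12\right)},
$$
so that the only analytic input needed is the standard ratio asymptotic $\Gamma(x+1)/\Gamma(x+\tfrac12)=\sqrt{x}\,(1+O(x^{-1}))$ applied with $x=n/2$; alternatively one may simply insert Stirling's expansion $\Gamma(x+1)=\sqrt{2\pi x}\,(x/e)^x\bigl(1+\tfrac{1}{12x}+O(x^{-2})\bigr)$ into $\Gamma\!\left(\tfrac n2+1\right)$ and into $n!=\Gamma(n+1)$ and simplify. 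Either route gives
$$
\frac{\Gamma\!\left(\tfrac n2+1\right)}{\sqrt{n!}}=\frac{(\pi n)^{1/4}}{2^{\,n/2+1/4}}\bigl(1+O(n^{-1})\bigr),
$$
and multiplying by $2^{\,n/2+1}$ leaves $2^{3/4}\pi^{1/4}n^{1/4}(1+O(n^{-1}))=(2^3\pi)^{1/4}n^{1/4}(1+O(n^{-1}))$, which is the assertion. (Tracking the first-order term, the relative error is in fact $\tfrac{1}{8n}+O(n^{-2})$.)

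There is no genuine obstacle here: the argument is elementary throughout, being a Gaussian moment followed by Stirling. The only place asking for a little care is the bookkeeping of the $O(x^{-1})$ corrections in the final step, and invoking the duplication formula keeps this transparent — the prefactor $2^{\,n/2+1}$ then exactly absorbs the factor $2^{-n/2}$ produced by the square root, leaving the clean constant $(2^3\pi)^{1/4}$ obtained by Janssen.
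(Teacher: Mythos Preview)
Your proof is correct and follows essentially the same route as the paper: both compute $\|h_n\|_{M^1(\R)}$ exactly via polar coordinates to obtain $2^{\,n/2+1}\Gamma(n/2+1)/\sqrt{n!}$, and then extract the asymptotics. The only cosmetic difference is that the paper applies Stirling's formula directly to numerator and denominator, whereas you insert the Legendre duplication formula first; this makes the cancellation of the $2^{n/2}$ factor slightly more transparent but is not a genuinely different argument.
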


\begin{proof}
Using properties of the Gabor transform and integration in polar coordinates, we have
\begin{align*}
    \|h_n\|_{M^1(\R)}&=\int_{\C}|\bb[h_n](z)|e^{-\pi|z|^2/2}\,\mbox{d}z
    =\frac{1}{\sqrt{n!}}\int_{\C}(\pi^{1/2}|z|)^n e^{-\pi|z|^2/2}\,\mbox{d}z\\
    &=\frac{2\pi}{\sqrt{n!}}\int_0^{\infty}(\pi^{1/2}r)^ne^{-\pi r^2/2}\,r\mbox{d}r\\
    &=2\frac{2^{\frac{n}{2}}}{\sqrt{n!}}\int_0^{\infty}s^{\frac{n}{2}+1}e^{-s}\,\frac{\mbox{d}s}{s}=2\frac{2^{\frac{n}{2}}\Gamma\left(\frac{n}{2}+1\right)}{\Gamma(n+1)^\frac{1}{2}}
\end{align*}
with the change of variable $r=\sqrt{\dfrac{2}{\pi}s}$.
Next, we apply Stirling's formula
$$
\Gamma(x+1)=\sqrt{2\pi}e^{-x}x^{x+\frac{1}{2}}\bigl(1+O(x^{-1})\bigr)
$$
to obtain
\begin{eqnarray*}
    \|h_n\|_{M^1(\R)}
    &=&2\left(\frac{\pi}{2}n\right)^{1/4}\bigl(1+O(n^{-1})\bigr)
\end{eqnarray*}
as claimed.
\end{proof}

Theorem~\ref{thm:app} is then a direct consequence of the following lemma \cite[Lemma 2.1]{GL2}:

\begin{lemma}[Gr\"ochenig-Lyubarskii]
    If $\|\ffi\|_{M^1}<\infty$ then
        \begin{enumerate}
        \item for every $f\in L^2(\R)$
        $$
        \sum_{\lambda\in\Lambda} \big|\scal{f,\pi(\lambda) \ffi}\big|^2\leq C \, \emph{rel}(\Lambda) \|\ffi\|_{M^1}^2\|f\|^2_{L^2(\R)};
        $$
        \item for every $a=(a_\lambda)_{\lambda\in\Lambda}\in\ell^2(\Lambda)$,
        $$
        \norm{\sum_{\lambda\in\Lambda} a_\lambda\pi(\lambda )\ffi}^2_{L^2(\R)}\leq C\, \emph{rel}(\Lambda) \|\ffi\|_{M^1}^2\|a\|_{2}^2.
        $$
    \end{enumerate}
\end{lemma}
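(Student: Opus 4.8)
The plan is to prove the two assertions in tandem: first reduce the synthesis bound (2) to the analysis bound (1) by duality, and then establish (1) via a pointwise convolution estimate for the short-time Fourier transform followed by a lattice-sampling argument. Throughout, let $g$ denote the normalized Gaussian used to define the modulation-space norms, so that $\norm{g}_{L^2(\R)}=1$ and, writing $V_g f(z)=\scal{f,\pi(z)g}$ for the short-time Fourier transform, Moyal's identity gives $\norm{V_g f}_{L^2(\R^2)}=\norm{f}_{L^2(\R)}$ while the defining formula for the modulation norm gives $\norm{\ffi}_{M^1}=\norm{V_g\ffi}_{L^1(\R^2)}$. Since $\scal{f,\pi(\lambda)\ffi}=V_\ffi f(\lambda)$, assertion (1) is exactly a statement about sampling $V_\ffi f$ on $\Lambda$.

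First I would observe that the analysis operator $C_\ffi\colon f\mapsto(\scal{f,\pi(\lambda)\ffi})_{\lambda\in\Lambda}$ and the synthesis operator $D_\ffi\colon a\mapsto\sum_\lambda a_\lambda\pi(\lambda)\ffi$ are formally adjoint, $D_\ffi=C_\ffi^\ast$. Hence $C_\ffi$ is bounded from $L^2(\R)$ to $\ell^2(\Lambda)$ with norm $\sqrt{B}$ if and only if $D_\ffi$ is bounded from $\ell^2(\Lambda)$ to $L^2(\R)$ with the identical norm, so the two displayed inequalities are equivalent with the same constant. It therefore suffices to prove (1).

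For (1) I would start from the covariance/reproducing identity for the STFT, which yields the pointwise convolution bound $\abs{V_\ffi f(z)}\le\bigl(\abs{V_g f}\ast\abs{(V_g\ffi)^{\vee}}\bigr)(z)$. Applying Cauchy–Schwarz to the convolution, splitting the weight as $\abs{V_g\ffi(w-\lambda)}=\abs{V_g\ffi(w-\lambda)}^{1/2}\cdot\abs{V_g\ffi(w-\lambda)}^{1/2}$, gives
$$
\abs{V_\ffi f(\lambda)}^2\le\norm{V_g\ffi}_{L^1}\int_{\R^2}\abs{V_g f(w)}^2\,\abs{V_g\ffi(w-\lambda)}\,\ud w.
$$
Summing over $\lambda\in\Lambda$ and interchanging sum and integral reduces matters to the uniform estimate $\sup_{w}\sum_{\lambda\in\Lambda}\abs{V_g\ffi(w-\lambda)}\le C\,\text{rel}(\Lambda)\,\norm{\ffi}_{M^1}$. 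Combined with $\norm{V_g\ffi}_{L^1}=\norm{\ffi}_{M^1}$ and $\norm{V_g f}_{L^2}^2=\norm{f}_2^2$, this produces exactly the asserted constant $C\,\text{rel}(\Lambda)\,\norm{\ffi}_{M^1}^2$, with the square in $\norm{\ffi}_{M^1}^2$ arising from the $L^1$-factor pulled out by Cauchy–Schwarz together with the sampling bound.

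The main obstacle is precisely this last sampling estimate, and it is where both the regularity of $M^1$ and the quantity $\text{rel}(\Lambda)$ enter. I would handle it through the Wiener-amalgam characterization of the Feichtinger algebra: for $\ffi\in M^1$ the function $V_g\ffi$ is continuous and lies in $W(L^\infty,\ell^1)$ with $\sum_{k\in\Z^2}\sup_{x\in k+[0,1)^2}\abs{V_g\ffi(x)}\le C\norm{\ffi}_{M^1}$, an equivalent norm on $M^1$. Partitioning $\R^2$ into unit cubes and using that each such cube contains at most $\text{rel}(\Lambda)$ points of $\Lambda$ then bounds $\sum_{\lambda}\abs{V_g\ffi(w-\lambda)}$ by $\text{rel}(\Lambda)$ times this amalgam norm, uniformly in $w$. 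This combinatorial counting of lattice points against the amalgam norm is the only genuinely nontrivial input; the convolution bound, Cauchy–Schwarz, and the interchange of summation and integration are routine manipulations of the STFT.
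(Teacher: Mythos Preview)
The paper does not prove this lemma at all; it is quoted from \cite[Lemma 2.1]{GL2} and used as a black box. Your proposal supplies a correct, self-contained proof: the duality of analysis and synthesis operators reduces (2) to (1); the change-of-window inequality $|V_\ffi f|\le |V_g f|\ast |V_g\ffi|^\vee$ together with Cauchy--Schwarz yields $|V_\ffi f(\lambda)|^2\le\|V_g\ffi\|_{L^1}\int|V_g f(w)|^2|V_g\ffi(w-\lambda)|\ud w$; and the amalgam characterization $V_g\ffi\in W(L^\infty,\ell^1)$ with norm $\lesssim\|\ffi\|_{M^1}$, combined with the fact that each unit cube meets $\Lambda$ in at most $\text{rel}(\Lambda)$ points, controls $\sup_w\sum_\lambda|V_g\ffi(w-\lambda)|$. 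This is precisely the standard argument behind the cited result (see also \cite[Chapter~12]{Gro}), so there is nothing in the present paper to compare against beyond the bare citation.
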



\section*{Acknowledgements}

The authors have no relevant financial or non-financial interests to disclose.

The authors thank H. Bahouri, H. Feichtinger and K. Gr\"ochenig for pointing out to the questions
addressed here and for valuable conversation on the subject.

This work was partially supported by the French National Research Agency (ANR) under contract number ANR-24-CE40-5470, and the Austrian Science Fund (FWF) via the project 10.55776/PAT1384824.

For open
access purposes, the authors have applied a CC BY public copyright license to any author-accepted manuscript
version arising from this submission.

\end{document}